\newtheorem{definition}{Definition}
\newtheorem{lemma}{Lemma}
\newtheorem{theorem}{Theorem}
\newtheorem{corollary}{Corollary}
\newtheorem{proposition}{Proposition}
\newtheorem{remark}{Remark}
\newtheorem{assumption}{Assumption}
\pgfplotsset{compat=1.16}
\title{Revisiting semidefinite programming approaches to options pricing: complexity and computational perspectives}
\author{
	Didier Henrion
	\thanks{CNRS-LAAS, Toulouse, France, henrion@laas.fr, 
	Faculty of Electrical Engineering, Czech Technical University in Prague}
	\and
	\textbf{Felix Kirschner}
	\thanks{Tilburg University,	Tilburg, the Netherlands, f.c.kirschner@tilburguniversity.edu   } 
	\and 
	\textbf{Etienne de Klerk}
	\thanks{Tilburg University,	Tilburg, the Netherlands, e.deklerk@tilburguniversity.edu }
	\and
	\textbf{Milan Korda}
	\thanks{CNRS-LAAS, Toulouse, France, korda@laas.fr, Faculty of Electrical Engineering, Czech Technical University in Prague }
	\and
	\textbf{Jean-Bernard Lasserre}
	\thanks{CNRS-LAAS, Toulouse, France, lasserre@laas.fr}
	\and
	\textbf{Victor Magron}
	\thanks{CNRS-LAAS, Toulouse, France, magron@laas.fr}}
\begin{document}
	\maketitle
	
	\begin{abstract}
			In this paper we consider the problem of finding bounds on the prices of options depending on multiple assets without assuming any underlying model on the price dynamics, but only the absence of arbitrage opportunities. We formulate this as a generalized moment problem and utilize the well-known Moment-Sum-of-Squares (SOS) hierarchy of Lasserre to obtain bounds on the range of the possible prices. A complementary approach (also due to Lasserre) is employed for comparison. We present several numerical examples to demonstrate the viability of our approach. The framework we consider makes it possible to incorporate different kinds of observable data, such as moment information, as well as observable prices of options on the assets of interest. 
		\keywords{Semidefinite programming \and Options pricing \and Moment-SOS hierarchy}
	\end{abstract}

	\maketitle

	\section{Introduction}\label{sec:intro}
	Derivative securities have become an integral part in financial economics and constitute attractive instruments for a wide variety of parties. Such products may be used to hedge portfolios, ensure financial planning security in supply chains and for investment purposes. The value of a derivative security relies on the value of one or multiple assets, called \emph{underlyings}, like stocks, currencies, commodities or similar. The most commonly used derivative securities are futures, forwards, swaps or options. A central question of financial economics is at what price to sell such products. Important in this respect is to ensure the price put on the security creates no possibility of arbitrage, i.e., there must not be a risk-free possibility to make money. Two main approaches to finding bounds on such prices are used throughout the literature. The first one assumes the prices of the underlying assets follow a stochastic differential equation (SDE) and tools from the theory of SDEs are used to solve the problem of finding a price. The most famous model in this regard is the Black and Scholes model, which provides closed formula solutions to many problems. However, this has the drawback that the assumed model is highly susceptible to model misspecifications and to parameter estimation errors. The other approach, which is the one we will follow, has no underlying model or assumptions on the price dynamics, but solely assumes the non-existence of arbitrage. It is based on the idea of using observable data like prices of other options on the same asset or prices of correlation-based derivatives and then using semidefinite optimization techniques to obtain solutions.

	In this paper we will focus on the problem of deriving bounds on the price of \emph{European call options}.  A European call option is a contract that gives the owner the right, but no obligation, to buy an underlying asset at fixed price, referred to as \emph{strike} (or \emph{strike price}) at a predetermined date in the future, called \emph{maturity}. Since the owner is not obliged to exercise the option, it has nonnegative value. For example, consider a European call option with strike $K$ on a stock, whose price at time $t$ is given by $S_t$. If at maturity $T$ the price $S_T$ of the stock is greater than the strike price $K$, a rational owner will exercise the option an make a profit of $S_T-K$. If, however, the price of the asset is less than the strike, the owner will not exercise the option (since they could buy the stock cheaper at the stock market) and therefore not make a profit. Thus, the \emph{payoff function} of the option is given by $\max\{ S_T -K, 0 \}$. There are many different types of options and we will introduce the ones that will be relevant in this paper. A \emph{rainbow option} is an option on multiple underlyings $S_T^{(1)}, \dots, S_T^{(n)}$ that pays on the level of one option. For example a call on max with payoff function $\max(0,\max\{S_t^{(1)}, \dots, S_t^{(n)}\}-K)$. This is equivalent to a \emph{lookback option} on one asset if $S_t^{(i)}$ is the price of the same asset at $n$ points in time. A \emph{basket option} also depends on multiple assets and pays on the level of more than one. For example, it could be a weighted linear combination of the prices of the assets at maturity with payoff function $\max(0,\sum_{i=1}^n \alpha_i S_t^{(i)}-K)$, where $\alpha_i \ge 0$. Examples for options of this type are index options or currency basket options. Because markets are incomplete in general, it is not possible to compute exact prices of options. However, one can compute bounds, such that, if the price of the option lies within the given range, it is consistent with the given information and does not create the possibility of arbitrage.
	
	\subsection{Prior work}
	The problem of computing bounds on option prices without assuming a specific price dynamic of the underyling assets has been studied since the 1970s beginning with the poineering work of Merton \cite{merton}. Cox and Ross \cite{cox} and Harrison and Kreps \cite{harrison} show that the assumption of no arbitrage possibilities is equivalent to the existence of a probability measure under which the option prices become Martingales. Boyle and Lin \cite{boyle} extended prior contributions of Lo \cite{lo} considering the problem of deriving upper bounds on basket options on multiple assets given the means and the covariance matrix of the underlying assets by constructing a semidefinite program. In \cite{bertsimas}, Bertsimas and Popescu considered a more general setting assuming observable options prices as well as moment information of the underlying distribution of the assets like means and variances are available. Using semidefinite programming techniques they solve the univariate case and give closed form solutions to some cases. For the multivariate case, i.e., options depending on multiple assets they prove that the problem is NP-hard in general and present a relaxation to the problem by enlarging the set of possible values the assets can attain. They follow up by identifying the cases in which their relaxation can be solved efficiently, which is the case if the objective and the constraint functions are quadratic or linear over $d$ disjoint polyhedra $D_1, \dots, D_d$ which form a partition of $\mathbb{R}^n$, where $n$ is the number of assets considered. Davis and Hobson \cite{davis} study the structure of the underlying problem and give sufficient and necessary conditions for the existence of measures specified in \cite{cox}, \cite{harrison}. In a series of papers (see \cite{laurence}, \cite{laurence2}, \cite{Hobson2005}) Hobson, Laurence, and Wang  consider the case of multivariate basket options and give sharp upper and lower bounds when the constraints consist of observable vanilla options prices. They do not employ semidefinite programming techniques, but approach the problem by constructing primal and dual solutions with a zero duality gap. Primbs \cite{Primbs2006} constructs dynamic replicating portfolios using semidefinite programming to get upper and lower bound on option prices, using knowledge of piecewise polynomial data. In his dissertation \cite{daspremont}, d'Aspremont computes bounds for basket options by constructing static replicating portfolios assuming knowledge on prices of different basket options with the same maturity. Li et al. \cite{han} extend the work of Bertsimas and Popescu using sum-of-squares (SOS) relaxations to obtain a hierarchy of bounds on option prices. Another approach was taken by Peña and Zuluaga \cite{Zuluaga2005}. They used tools from conic programming to reformulate the considered problem and prove strong duality in many cases. To give approximate solutions to the problem they propose to use increasingly tight outer approximations of the cone of interest. For certain sets $K$ they provide explicit outer approximation sequences for the cone of measures supported on $K$, and use these to compute upper bounds for option prices. 
	
	\subsection{Contribution of this paper}
		Our work builds on the work of Bertsimas and Popescu \cite{bertsimas}. We analyze and computationally explore cases which they simply determined to be NP-hard. We consider a model similar to the one treated by Li et al. \cite{han}, which in itself is a generalization of the problem Boyle and Lin \cite{boyle} considered. While the authors in \cite{han} focus on a dual approach using inner (i.e., SOS) approximations of the cone of positive polynomials, our main interest lies in a primal method relying on an outer approximation of the moment cone. In contrast to Li et al. we give a rigorous argument as to why we consider compact underlying sets whenever we do so. To complement our primal method of outer approximation we analyze an inner approximation of the moment cone as well. Our inner approximation does not rely on any compactness assumption. In special cases we give explicit bounds on the support of the optimal solution of the treated problem. Our method of outer approximations takes the same approach as Peña and Zuluaga in \cite{Zuluaga2005}. Our analysis contributes additional insights into when optimal solutions exist and the proposed hierarchies converge. Several numerical examples are provided to illustrate the effectiveness of our methods.  
		
		\subsection{Outline of the paper}
	
	We first introduce some notation and give insights to the generalized moment problem and the Moment-SOS hierarchy as tools to approximate such problems in section \ref{sec:preliminaries}. 
	
	
	Following that, in section \ref{sec:outer} we present the problem we intend to study in this paper, which is finding bounds on the prices of options depending on multiple assets without assuming any underlying stochastic processes of the assets prices. This can be modeled as a generalized moment problem over a non-compact set.
	
	
	Also in section \ref{sec:outer}, we prove the existence of an optimal solution of the problem formulation we proposed. Equipped with this knowledge we continue in section \ref{sec:boundsupport} to apply the core variety procedure to a special case to demonstrate how to obtain a bound on the support of the optimal solution. 
	
	
	Section \ref{sec:examplesOuter} contains a few examples of numerical computations for problems with real world data as well as some explanation of the implementation techniques. In section \ref{sec:noncompact} we apply a relaxation technique for the non-compact generalized moment problem to our setting and conclude the section with a numerical example to show its effectiveness.
	

	\section{Preliminaries}\label{sec:preliminaries}
	\subsection{Notation}
	Let $\mathbb{N} = \{0, 1, 2, \dots \}$ be the set of nonnegative integers. We will denote by $\mathbb{R}[\textbf{x}] = \mathbb{R}[x_1, \dots, x_n]$ the ring of real multivariate polynomials in $n$ variables $\textbf{x} = (x_1, \dots, x_n)$. For a vector $\alpha \in \mathbb{N}^n$ with $\alpha = (\alpha_1, \dots, \alpha_n)$ we define $\textbf{x}^\alpha = x_1^{\alpha_1}\cdot \ldots \cdot x_n^{\alpha_n}$. The degree of a monomial $\textbf{x}^\alpha$ is defined as $\vert \alpha \vert = \alpha_1 + \dots+ \alpha_n$ and the degree $\textrm{deg}(p)$ of a polynomial $p \in \mathbb{R}[\textbf{x}]$ is the largest degree of all monomials it consists of. Further, let $\mathbb{N}^n_r = \{ \alpha \in \mathbb{N}^n : \alpha_1 + \dots + \alpha_n \le r \}$. The notation $[m]$ for $m \in \mathbb{N}\setminus \{ 0 \}$ denotes the set $\{ 1, \dots, m \}$. For $r \in \mathbb{N}$ we define $[\textbf{x}]_r$ to be a monomial basis vector of $\mathbb{R}[\textbf{x}]_r$, i.e.,
	\[
	[\textbf{x}]_r^T = (1, x_1, \dots, x_n, x_1^2, \dots, x_n^r).
	\]
	A polynomial $p \in \mathbb{R}[\textbf{x}]$ is called a \emph{sum of squares}, abbreviated SOS, if it can be written as a sum of squared polynomials, i.e. if 
	\[
	p = \sum_{i=1}^m p_i(\textbf{x})^2,
	\]
	for $p_i \in \mathbb{R}[\textbf{x}]$. Given $\omega \in \mathbb{R}$ and $\tilde x \in \mathbb{R}^n$, a weighted Dirac delta measure $\omega \delta_{\tilde{x}}$ with weight $\omega$ is a (atomic) measure with all its mass concentrated on $\tilde{x}$. 
	If $\omega = 1$ then it is a probability measure. For a set $K \subset \mathbb{R}^n$ we denote by $\mathcal{M}(K)_+$ the set of positive  finite Borel measures supported on $K$. 
	By $\mathbbm{1}_{A}(\textbf{x})$ we denote the indicator function of the set $A$. We denote by $\mathbb{S}^n$ the set of $n \times n$ symmetric matrices, by $\mathbb{S}^n_+$ the set of positive semidefinite matrices, and we write $X \succeq 0$ for $X \in \mathbb{S}^n_+$.
	
	\subsection{GMP and Moment-SOS-hierarchy}
	
	Let $K \subset \mathbb{R}^n$. The generalized moment problem (GMP) is an optimization problem of the following form
	
	\begin{equation}\label{gengmp}
	\begin{aligned}
	\inf_{\mu \in \mathcal{M}(K)_+} & \int_K f_0(\textbf{x}) \mathrm{d}\mu(\textbf{x}) \\
	\text{ s.t. } & \int_K f_i(\textbf{x}) \mathrm{d}\mu(\textbf{x}) = a_i \,, \text{ for } i \in [m_1] \\
	& \int_K g_i(\textbf{x})\mathrm{d}\mu(\textbf{x}) \le b_i\,, \text{ for } i \in [m_2] \,,
	\end{aligned}
	\end{equation}
	for $m_1, m_2 \in \mathbb{N}$, $a_i, b_j \in \mathbb{R}$  and $f_i, g_j \in \mathbb{R}[\textbf{x}]$ for all $i \in [m_1],j \in [m_2]$. Since the vector space $\mathcal{M}(K)$ of signed Borel measure is infinite dimensional, this problem is an infinite dimensional conic linear optimization problem, whose duality theory is well understood, see, e.g., \cite{shapiro}. It is straightforward to extend this problem to a more general case where one optimizes over multiple measures supported on different sets and we refer to \cite{Tacchi2021} for an analysis of the more general case. We will use this framework to study the problem of pricing options in this paper.

	
	Many  NP-hard problems can be modeled via the GMP, see e.g.\  \cite{deKlerk}.
	Thus, solving the GMP in full generality is rather hopeless. However, it is possible to construct relaxation hierarchies, whose optimal values serve as bounds on the optimal value and often one can prove they converge to the optimal value. One such hierarchy is the Moment-SOS hierarchy developed by Lasserre \cite{lasserre}. Let $\textbf{y} = \{ y_\alpha \}_{\alpha \in \mathbb{N}^n}$ be an infinite real sequence and let $L_\textbf{y} : \mathbb{R}[\textbf{x}] \rightarrow \mathbb{R}$ be the Riesz linear functional defined by 
	\[
	f(\textbf{x}) = \sum_{\alpha \in \mathbb{N}^n} f_\alpha \textbf{x}^\alpha \mapsto L_\textbf{y}(f) = \sum_{\alpha \in \mathbb{N}^n} f_\alpha y_\alpha.
	\]
	Therefore, if $\textbf{y}$ is the moment sequence of a measure $\mu$ supported on a set $K$, i.e. 
	\[
	y_{\alpha} = \int_K \textbf{x}^\alpha \mathrm{d}\mu(\textbf{x})\, \text{ for } \alpha \in \mathbb{N}^n 
	\]
	then $L_{\textbf{y}}$ coincides with the integration operator on polynomials with respect to $\mu$, i.e. for a polynomial $f \in \mathbb{R}[\textbf{x}]$ we find
	\[
	L_{\textbf{y}}(f) = \sum_{\alpha \in \mathbb{N}^n} f_\alpha y_\alpha = \int_K f(\textbf{x})\mathrm{d}\mu(\textbf{x}).
	\]
	Given a finite sequence $\textbf{y} = \{y_\alpha\}_{\alpha \in \mathbb{N}^n_{2r}}$, we associate the so-called {\em truncated moment matrix} $M_r(\textbf{y})$ to $\textbf y$, defined as $(M_r(\textbf{y}))_{\alpha, \beta} = y_{\alpha+\beta}$ for $\alpha, \beta \in \mathbb{N}^n_r$. Such a matrix has dimensions $s(r) \times s(r)$, where
	
	\[
	s(r) = \binom{n+r}{r}.
	\]
	For $\textbf{y}$ as above, given a polynomial $g \in \mathbb{R}[\textbf{x}]$ of degree $d$, we define the \emph{localizing matrix} $M_r(g \star \textbf{y})$ associated to $\textbf{y}$ and $g$ as 
	\[
	(M_r(g \star \textbf{y}))_{\alpha, \beta} = \sum_{\gamma \in \mathbb{N}^n_d} g_\gamma y_{\alpha+\beta+\gamma}\,, \text{ for } \alpha, \beta \in \mathbb{N}^n_r.
	\]
	Let us now assume that $K$ is defined as a finite conjunction of polynomial inequalities, i.e., a {\em basic closed semialgebraic} set: 
	\begin{equation}\label{ksemialg}
	K = \{ \textbf{x} \in \mathbb{R}^n : h_i(\textbf{x})\ge 0 \text{ for } i \in [m_3]  \}\,.
	\end{equation}
	For later purpose, let us define 
	\[
	r_{\min} := \max_{i \in [m_1], j\in [m_2], k \in [m_3]}\{ \textrm{deg}(f_0), \textrm{deg}(f_i), \textrm{deg}(g_j),\textrm{deg}(h_k) \}
	\,. 
	\]
	For $r \in \mathbb{N}$, with $r \ge r_{\min}$, the level $r$ of Moment-SOS relaxation of \eqref{gengmp} is defined as  
	
	\begin{equation}\label{momsos}
	\begin{aligned}
	\inf_{\textbf{y} \in \mathbb{R}^{s(2r)+r_{\min}}} \; & L_\textbf{y}(f_0) \\
	\text{s.t.} \; & L_\textbf{y}(f_i) = a_i\,, \text{ for } i \in [m_1] \\
	& L_\textbf{y}(g_i) \le b_i\,, \text{ for } i \in [m_2] \\
	& M_r(\textbf{y}) \succeq 0 \\
	& M_r(h_i \star \textbf{y}) \succeq 0\,, \text{ for } i \in [m_3].	        
	\end{aligned}
	\end{equation}
	For each $r$ this is a semidefinite optimization problem (SDP) that can be solved up to arbitrary precision in polynomial time using for instance interior point methods. SDPs can be understood as a powerful generalization of linear programming problems (LPs), which are more common in practice. The difference is that the decision variables in SDPs are positive semidefinite matrices, whereas in LPs these are nonnegative scalar variables. For a comprehensive introduction in semidefinite optimization we refer to the paper by Boyd and Vandenberghe, see \cite{boyd}. The moment and localizing matrices depend linearly on $\textbf{y}$ and the cost is linear in $\textbf{y}$. The Moment-SOS hierarchy presented above was introduced by Lasserre, see \cite{lasserre}. For a survey on semidefinite programming relaxation of GMPs we refer the reader to \cite{deKlerk}. Also worth mentioning is the following sufficient condition for convergence of the Moment-SOS hierarchy to the optimal value of the corresponding GMP. For this we make the following assumption which is slightly stronger than compactness. 
	
	\begin{assumption}\label{archim}
		Let $h_j(\textbf{x})$ for $j \in [m_3]$ be the defining polynomials of $K$ in \eqref{ksemialg} and define $h_0(\textbf{x}) = 1$ for all $\textbf{x} \in \mathbb{R}^n$. There exist SOS polynomials $\sigma_j$ for $j = 0, 1, \dots, m_3$ such that $N - \|\textbf{x}\|^2 = \sum_{j = 0}^{m_3} \sigma_j(\textbf{x}) h_j(\textbf{x})$.
	\end{assumption}
	
	Assumption \ref{archim} is equivalent to the so-called \emph{Archimedian condition} and if it is satisfied, the Moment-SOS relaxation \eqref{momsos} converges to the optimal value of \eqref{gengmp} for $r \rightarrow \infty$, (cf. \cite[Theorem 4.1]{lasserre2}). Nie \cite{nie} proved that the optimal value is achieved for a finite $r$ for generic polynomial optimization problems. Note that if we know that $K$ is compact one can simply add the redundant constraint $N-\|\textbf{x}\|^2\ge 0$ to $K$ for $N$ such that $N \ge \|\textbf{x}\|^2$ for all $\textbf{x} \in K$ so that Assumption \ref{archim} is satisfied.

	\section{Bounds on options via the GMP formulation}\label{sec:outer}
	
	In this section we will cast the problem of computing bounds on the price of European call options as a particular instance of the GMP. The option will be dependent on $n$ assets $S_1, \dots, S_n$. We will denote the payoff function by $\varphi : \mathbb{R}^n_+ \rightarrow \mathbb{R}_+$, which may depend on the prices of the $n$ different assets. We assume the payoff is nonnegative, since we consider options, meaning there is no obligation of the owner to exercise it, in which case the payoff is zero. The range of possible prices for asset $S_i$ will be the nonnegative reals, i.e., $x_i \in \mathbb{R}_+$. Note that the payoff function is what defines the type of the option. As has been mentioned the no-arbitrage assumption is equivalent to the existence of a probability measure $\mu$ such that asset prices become martingales under $\mu$. This measure is referred to as the \emph{equivalent martingale measure} or the \emph{risk-neutral measure}. The price of the option is then given by the expectation of the payoff function with respect to this measure. Here and throughout this paper we assume for simplicity an interest rate of $0$.
	
	\subsection{Problem statement}
	
	For some finite index set $\mathcal{I}$ let information pairs $(f_i, q_i)$ for $i \in \mathcal{I}$ where $f_i : \mathbb{R}^n_+ \rightarrow \mathbb{R}$ and $q_i \in \mathbb{R}$, be given. These pairs might consist of payoff functions $f_i$ of options on the assets $S_1, \dots, S_n$ with the observable prices $q_i$ at which theses options are traded, or prices of derivatives on moments of underlying asset, such as mean, variance or correlation. In order to find bounds for the option at hand we will look for a probability measure that is consistent with this given information. In other words, the feasible set of measures $\mu$ will consist of measures such that
	
	\[
	\int_{\mathbb{R}^n_+}f_i(\textbf{x})\mathrm{d}\mu(\textbf{x})\leqq q_i\,, \text{ for all } i \in  \mathcal{I},
	\] 
	where $"\leqq"$ means either $"\le"$ or $"="$. We will also assume that the $d$-th order moments of the corresponding distributions are finite for some $d \in \mathbb{N}$. To fix ideas we will consider the following problem adapted from \cite{bertsimas}. Given $n$ assets $S_1, \dots, S_n$ whose prices are given by $x_1, \dots, x_n$, we want to find a lower bound on a European call option whose payoff may depend on the assets $S_i$ for $i \in [n]$. The available information  is the following: we have $N_i \in \mathbb{N}$ prices $q_{i,j}, j \in [N_i]$ of options on asset $S_i$ with strikes $k_{i,j}$ for $j \in [N_i]$.	Additionally, we have some moment information in the following form 
	\[
	\int_{\mathbb{R}^n_+} f_\ell(\textbf{x})\mathrm{d}\mu(\textbf{x}) = p_\ell\,,
	\]
	where  $ f_\ell \in \mathbb{R}[\textbf{x}]$ and $p_\ell \in \mathbb{R}$. For example, if $\gamma_i$ is the observed mean of asset $i$ and the observed covariance of assets $i$ and $j$ is $\sigma_{i,j}$, one can add the constraint
	\[
	\int_{\mathbb{R}^n_+} (x_i-\gamma_i)(x_j-\gamma_j)\mathrm{d}\mu(\textbf{x}) = \sigma_{i,j}\,.
	\]
	
	Further, we assume the $d$-th order moments under a risk-neutral pricing measure are finite, where 
	\[ 
	d =\max_{i \in [n], j\in [N_i], \ell \in [m]}\{ \textrm{deg}(\varphi), \textrm{deg}(f_{i,j}), \textrm{deg}(f_\ell) \} +1.
	\]
	What we mean by this is that 
	\[
	\int_{\mathbb{R}^n_+} \|\textbf{x}\|_2^d \mathrm{d}\mu(\textbf{x}) \le M
	\]
	for some $M \in \mathbb{R}_+$, where $\|\textbf{x}\|_2 = \sqrt{x_1^2 + \dots + x_n^2}$ is the standard Euclidean $\ell_2$-norm. A risk-neutral pricing measure is a measure such that the asset prices are equal to the expectation under this measure discounted by the risk-free interest rate. For convenience, we assume that $d$ is even, otherwise we set $d \leftarrow d+1$. This way we make sure that we are dealing with a GMP with (piecewise) polynomial data. 
	The optimal value of the optimization problem below will serve as bound for the given option that is consistent with the available information.
	
	\begin{equation} \label{prob1}
	\begin{aligned}
	\sup_{\mu \in \mathcal{M}(\mathbb{R}^n_+)_+} / \inf_{\mu \in \mathcal{M}(\mathbb{R}^n_+)_+} \; & \int_{\mathbb{R}^n_+} \varphi(\textbf{x}) \mathrm{d}\mu(\textbf{x}) \\
	\text{ s.t. } &\int_{\mathbb{R}^n_+} \max(0, x_i-k_{i,j}) \mathrm{d}\mu(\textbf{x}) = q_{i,j}\,, \text{ for } i \in [n], j\in [N_i]  \\
	& \int_{\mathbb{R}^n_+}f_\ell (\textbf{x}) \mathrm{d}\mu(\textbf{x}) = p_\ell\,,  \text{ for } \ell \in [m] 
	\\
	&\int_{\mathbb{R}^n_+} \mathrm{d}\mu(\textbf{x}) =1 
	\\
	&\int_{\mathbb{R}^n_+} \|\textbf{x}\|_2^d \mathrm{d}\mu(\textbf{x}) \le M\,.
	\end{aligned}
	\end{equation}
	To obtain upper bounds we maximize and for lower bounds we minimize. In a nutshell, one is looking for the probability distribution of the asset price, that is consistent with the known information and minimizes (respectively maximizes) the objective. 
	
	\subsection{Existence of an optimal solution}

	Now we prove that the infimum/supremum in~(\ref{prob1}) is attained. In order to do so, we will use the Prokhorov theorem \cite{prokhorov} asserting a weak sequential compactness of a family of tight measures.
	\begin{definition}[Tightness]
		A sequence of measures $(\mu_k)_{k=1}^\infty$ defined on $\mathbb{R}^n$ is called tight if for every $\epsilon > 0$ there exists a compact set $K$ such that $\mu_k(K^c) < \epsilon$ for all $k \in \mathbb{N}$.
	\end{definition}
	
	\begin{theorem}[Prokhorov]\label{Thmprokhorov}
		Let $(\mu_k)_{k=1}^\infty$ be a tight sequence of Borel probability measures on $\mathbb{R}^n$. Then there exists a Borel probability measure $\mu$ and a subsequence $(\mu_{k_i})_{i=1}^\infty$ converging weakly to $\mu$, i.e.,
		\begin{equation}\label{weakConv}
		\lim_{i\to\infty}\int g \,\mathrm{d}\mu_{k_i} = \int g\,\mathrm{d}\mu
		\end{equation}
		for all bounded continuous functions $g$ on $\mathbb{R}^n$.
	\end{theorem}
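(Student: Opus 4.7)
The strategy is to combine tightness (which confines almost all the mass uniformly) with a diagonal extraction for integrals against continuous compactly supported functions, and then invoke the Riesz--Markov representation theorem to identify the limit functional as a Borel measure. Finally, tightness is used a second time to upgrade convergence from $C_c(\mathbb{R}^n)$ to the space of bounded continuous functions, and I anticipate this last upgrade to be the main technical difficulty.

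Concretely, by tightness I first pick an increasing sequence of compact sets $K_1 \subseteq K_2 \subseteq \cdots$ with $\mu_k(\mathbb{R}^n \setminus K_m) < 1/m$ for all $k$, and, enlarging if necessary, also $\overline{B(0,m)} \subseteq K_m$, so that every $f \in C_c(\mathbb{R}^n)$ has support inside some $K_m$. Each $C(K_m)$ is separable, and on a countable dense subset $\{f_{m,j}\}_{j \ge 1}$ the scalars $\int f_{m,j}\,\mathrm{d}\mu_k$ are uniformly bounded in $k$ by $\|f_{m,j}\|_\infty$. A standard Cantor diagonal extraction then yields a subsequence $(\mu_{k_i})$ such that $\int f\,\mathrm{d}\mu_{k_i}$ converges for every $f$ in every $C(K_m)$, and hence, by the support observation, for every $f \in C_c(\mathbb{R}^n)$.

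Setting $\Lambda(f) := \lim_i \int f\,\mathrm{d}\mu_{k_i}$ defines a positive linear functional on $C_c(\mathbb{R}^n)$ with $|\Lambda(f)| \le \|f\|_\infty$, so the Riesz--Markov--Kakutani theorem produces a unique positive Borel measure $\mu$ on $\mathbb{R}^n$ representing $\Lambda$. To see that $\mu$ is a probability measure, pick continuous cutoffs $\chi_m \in C_c(\mathbb{R}^n)$ with $\mathbbm{1}_{K_m} \le \chi_m \le 1$; then $\int \chi_m\,\mathrm{d}\mu = \lim_i \int \chi_m\,\mathrm{d}\mu_{k_i} \ge 1 - 1/m$, giving $\mu(\mathbb{R}^n) \ge 1$, while the reverse inequality is immediate from $\int \chi\,\mathrm{d}\mu_{k_i} \le 1$ for any $0 \le \chi \le 1$ in $C_c(\mathbb{R}^n)$.

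The hard step is the final one: lifting convergence from $C_c(\mathbb{R}^n)$ to all bounded continuous $g$, and this is where tightness re-enters decisively. Given $\epsilon > 0$, I choose $m$ with $\|g\|_\infty/m < \epsilon$ and the cutoff $\chi_m$ above; then both $|\int g\,\mathrm{d}\mu_{k_i} - \int g\chi_m\,\mathrm{d}\mu_{k_i}|$ and $|\int g\,\mathrm{d}\mu - \int g\chi_m\,\mathrm{d}\mu|$ are bounded by $\|g\|_\infty/m < \epsilon$ uniformly in $i$, because the integrands differ only on $K_m^c$, which carries mass at most $1/m$ under $\mu$ and under every $\mu_{k_i}$. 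Since $g\chi_m \in C_c(\mathbb{R}^n)$, the previously established convergence $\int g\chi_m\,\mathrm{d}\mu_{k_i} \to \int g\chi_m\,\mathrm{d}\mu$ combines with the two tail estimates to yield $\int g\,\mathrm{d}\mu_{k_i} \to \int g\,\mathrm{d}\mu$, completing the proof.
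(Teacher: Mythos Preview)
The paper does not prove Prokhorov's theorem; it merely states it with a reference to \cite{prokhorov} and then uses it as a black box in the proof of Lemma~\ref{infattained}. So there is no ``paper's own proof'' to compare against.

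Your argument is a standard and essentially correct proof of Prokhorov's theorem on $\mathbb{R}^n$ via Riesz--Markov. One small point deserves tightening: in the last step you assert that $\mu(K_m^c) \le 1/m$, but your cutoff $\chi_m$ satisfies $\mathbbm{1}_{K_m} \le \chi_m \le 1$ with $\chi_m \in C_c(\mathbb{R}^n)$, so its support can spill outside $K_m$. The inequality $\int \chi_m\,\mathrm{d}\mu \ge 1 - 1/m$ therefore only gives $\mu(\mathrm{supp}\,\chi_m) \ge 1 - 1/m$, not $\mu(K_m) \ge 1 - 1/m$. The fix is immediate: choose $\chi_m$ with support contained in $K_{m+1}$, so that $\mu(K_{m+1}^c) \le 1/m$, and run the final truncation argument with $K_{m+1}$ in place of $K_m$ (or simply observe that $\mu$ inherits tightness along the compacts $\mathrm{supp}\,\chi_m$). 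With this adjustment the proof goes through.
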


	\begin{lemma}\label{infattained}
		If Problem~(\ref{prob1}) is feasible, then its supremum/infimum  is attained.
	\end{lemma}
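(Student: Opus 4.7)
The plan is to exploit the moment bound $\int \lVert \textbf{x}\rVert_2^d \,\mathrm{d}\mu \le M$ together with Prokhorov's theorem to extract a weak limit of a maximizing/minimizing sequence, and then to promote weak convergence on bounded continuous test functions to convergence of the integrals that actually appear in~(\ref{prob1}).

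First, I would fix the sup case (the inf case is identical) and choose a sequence $(\mu_k)_{k=1}^\infty$ of feasible measures with $\int \varphi \,\mathrm{d}\mu_k \to v^*$, the supremum. Each $\mu_k$ is a probability measure on $\mathbb{R}^n_+$ with $\int \lVert \textbf{x}\rVert_2^d\,\mathrm{d}\mu_k \le M$. By Markov's inequality,
\[
\mu_k\bigl(\{\textbf{x}\in\mathbb{R}^n_+: \lVert \textbf{x}\rVert_2 > R\}\bigr) \le \frac{1}{R^d}\int \lVert \textbf{x}\rVert_2^d \,\mathrm{d}\mu_k \le \frac{M}{R^d},
\]
uniformly in $k$, so for any $\epsilon>0$ one can pick $R$ large and consider the compact set $K_R = \{\textbf{x}\in\mathbb{R}^n_+:\lVert \textbf{x}\rVert_2\le R\}$ to obtain $\mu_k(K_R^c)<\epsilon$. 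Thus $(\mu_k)$ is tight, and by Theorem~\ref{Thmprokhorov} there is a probability measure $\mu^*$ on $\mathbb{R}^n$ and a subsequence (still denoted $\mu_k$) with $\mu_k\rightharpoonup \mu^*$ weakly. Since $\mathbb{R}^n_+$ is closed, the Portmanteau theorem yields $\mu^*(\mathbb{R}^n_+)\ge \limsup_k \mu_k(\mathbb{R}^n_+)=1$, so $\mu^*\in\mathcal{M}(\mathbb{R}^n_+)_+$ is a probability measure.

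The heart of the proof, and the main obstacle, is that the functions appearing in~(\ref{prob1})---$\varphi$, the call payoffs $\max(0,x_i-k_{i,j})$, the polynomials $f_\ell$, and $\lVert \textbf{x}\rVert_2^d$---are unbounded, so weak convergence of $\mu_k$ does not directly give convergence of their integrals. To handle this I would prove a uniform integrability lemma: for every continuous $g:\mathbb{R}^n_+\to\mathbb{R}$ with $|g(\textbf{x})|\le c(1+\lVert \textbf{x}\rVert_2^{d-1})$,
\[
\int_{\lVert \textbf{x}\rVert_2>R}|g(\textbf{x})|\,\mathrm{d}\mu_k \le c\,\mu_k(\lVert \textbf{x}\rVert_2>R) + \frac{c}{R}\int \lVert \textbf{x}\rVert_2^d\,\mathrm{d}\mu_k \le c\Bigl(\tfrac{M}{R^d}+\tfrac{M}{R}\Bigr),
\]
which tends to $0$ uniformly in $k$ as $R\to\infty$. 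Splitting $g = g\cdot\chi_R + g\cdot(1-\chi_R)$ with a continuous cutoff $\chi_R$ supported in $\{\lVert \textbf{x}\rVert_2\le R+1\}$ and applying weak convergence to the bounded continuous part $g\chi_R$, followed by letting $R\to\infty$, shows $\int g\,\mathrm{d}\mu_k\to\int g\,\mathrm{d}\mu^*$. By the definition of $d$, every integrand in the equality/inequality constraints and in the objective satisfies such a growth bound, so all constraints of~(\ref{prob1}) pass to the limit and $\int \varphi\,\mathrm{d}\mu^* = v^*$.

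Finally, for the moment constraint itself the integrand $\lVert \textbf{x}\rVert_2^d$ has exactly the critical growth, so the truncation trick does not hand us equality; however $\lVert \textbf{x}\rVert_2^d$ is nonnegative and lower semicontinuous, hence by the Portmanteau theorem (or Fatou for weak convergence)
\[
\int \lVert \textbf{x}\rVert_2^d\,\mathrm{d}\mu^* \le \liminf_{k\to\infty}\int \lVert \textbf{x}\rVert_2^d\,\mathrm{d}\mu_k \le M,
\]
which is exactly the inequality constraint in~(\ref{prob1}). Together this shows $\mu^*$ is feasible and optimal, completing the proof.
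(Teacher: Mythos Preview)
Your proof is correct and takes a genuinely different route from the paper's. The paper does not work with the sequence $(\mu_k)$ directly; instead it introduces the weighted measures $\mathrm{d}\phi_k = (1+\lVert\textbf{x}\rVert_2^{d-1})\,\mathrm{d}\mu_k$, shows that \emph{these} are tight, extracts a weak limit $\phi$, and then sets $\mathrm{d}\mu = \mathrm{d}\phi/(1+\lVert\textbf{x}\rVert_2^{d-1})$. The point of this change of measure is that each integrand $f_{i,j}$, $f_\ell$, $\varphi$ divided by $1+\lVert\textbf{x}\rVert_2^{d-1}$ becomes bounded and continuous, so weak convergence of $\phi_k$ immediately gives convergence of the constraint integrals without any truncation argument. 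Your approach instead keeps the original measures, proves tightness via Markov, and then compensates for the unboundedness of the integrands by a uniform integrability estimate and a cutoff $\chi_R$. Both strategies exploit the same gap between the growth order $d-1$ of the data and the moment order $d$; the paper absorbs the weight into the measure, you absorb it into a tail estimate. Your route is arguably more elementary and closer to textbook probability (it is exactly the ``uniform integrability $+$ weak convergence $\Rightarrow$ convergence of expectations'' principle), while the paper's route is slicker once the reweighting is in place. For the critical-order constraint $\int\lVert\textbf{x}\rVert_2^d\,\mathrm{d}\mu\le M$ the two proofs coincide: both use a truncation $\min(\lVert\textbf{x}\rVert_2^d,N)$ together with monotone convergence (your Fatou/Portmanteau formulation is equivalent). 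One small point worth making explicit in your write-up: to conclude $\int g(1-\chi_R)\,\mathrm{d}\mu^*\to 0$ you need $\int\lVert\textbf{x}\rVert_2^{d-1}\,\mathrm{d}\mu^*<\infty$, which follows from the lower-semicontinuity step you already have (applied to $\lVert\textbf{x}\rVert_2^{d-1}$ or to $\lVert\textbf{x}\rVert_2^{d}$ plus Jensen); stating that first avoids any appearance of circularity.
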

	\begin{proof}{}
		We begin by observing that if (\ref{prob1}) is feasible, then the infimum in (\ref{prob1}) is finite since the objective function is nonnegative. 
		Also, the supremum is finite because of the last constraint $\int_{\mathbb{R}^n_+} \| \textbf{x}\|_2^d \mathrm{d}\mu(\textbf{x}) \le M$. Denote $f_{i,j}:= \max(0,x_i-k_{i,j})$ and let $(\mu_k)_{k=1}^\infty$ be an  optimizing sequence for (\ref{prob1}). Denote by $\phi_k$ the measures defined by
		\[
		\mathrm{d}\phi_k = (1+\|\textbf{x}\|_2^{d-1})\mathrm{d}\mu_k\,. 
		\]
		Moving on, we show that the sequence $(\phi_k)_{k=1}^\infty$ is tight. Let $\epsilon >0$ be given and let $K$ be the closed ball of radius $a$. Then we have
		\begin{equation*}
		\begin{aligned}
		\phi_k(K^c) &= \int_{\mathbb{R}^n} \mathbbm{1}_{\{ \|\textbf{x} \|_2 \ge a \}}(1+\|\textbf{x}\|_2^{d-1})\mathrm{d}\mu_k \\ 
		&\le \frac{1}{a} \int_{\mathbb{R}^n}\|\textbf{x}\|_2  (1+\|\textbf{x}\|_2^{d-1})\mathrm{d}\mu_k \le \frac{M^{1/d} + M}{a}\,,
		\end{aligned}    
		\end{equation*}
		where we used Jensen's inequality \cite{Jensen1906} in the last step. By picking $a$ sufficiently large, we make $\phi_k(K^c) < \epsilon$, hence establishing tightness. By Theorem \ref{Thmprokhorov}, there exists a weakly convergent subsequence (that we do not relabel) that converges weakly to a measure $\phi$. We set
		\[
		\mathrm{d}\mu := \frac{\mathrm{d}\phi }{1+\|\textbf{x}\|_2^{d-1}}
		\]
		to be the candidate optimizer for~(\ref{prob1}). We first show that the equality constraints for (\ref{prob1}) are satisfied by $\mu$. We have
		\begin{align*}
		q_{i,j} & = \lim_{k\to\infty}\int f_{i,j}\,\mathrm{d}\mu_k = \lim_{k\to\infty} \int \frac{f_{i,j}}{1+\|\textbf{x}\|_2^{d-1}}\,\mathrm{d}\phi_k \\
		&= \int \frac{f_{i,j}}{1+\|\textbf{x}\|_2^{d-1}}\,\mathrm{d} \phi = \int f_{i,j}\,\mathrm{d}\mu\,,
		\end{align*}
		where in the third equality we used the fact that the function $\frac{f_{i,j}}{1+\|\textbf{x}\|_2^{d-1}}$ is continuous and bounded. The same argument applies to the objective function and the constraint $\int \mathrm{d}\mu = 1$, as well as for the functions $f_\ell, \ell \in [m]$. 
		Finally, we establish that $\int \|\textbf{x}\|_2^{d}\,\mathrm{d}\mu < M$. We define $f_n(\textbf{x}) := \min(\|\textbf{x}\|_2^d,n)$. Then we have
		\begin{align*}
		\int \|\textbf{x}\|_2^d\,\mathrm{d}\mu &\overset{(i)}{=} \lim_{n\to\infty} \int  f_n\,\mathrm{d}\mu = \lim_{n\to\infty} \int  \frac{f_n}{1 + \|\textbf{x}\|_2^{d-1}}\,\mathrm{d}\phi \\  &\overset{(ii)}{=}  \lim_{n\to\infty} \lim_{k\to\infty} \int  \frac{f_n}{1 + \|\textbf{x}\|_2^{d-1}} \,\mathrm{d}\phi_k  = \lim_{n\to\infty} \lim_{k\to\infty} \int  f_n \,\mathrm{d}\mu_k \\ &\overset{(iii)}{\le} \lim_{n\to\infty} \lim_{k\to\infty} \int  \|\textbf{x}\|_2^d \,\mathrm{d}\mu_k \le M\,,
		\end{align*}
		where we used the monotone convergence theorem in $(i)$, the weak convergence of $\phi_k$ to $\phi$ in $(ii)$ and the fact that $f_n \le \|\textbf{x}\|_2^d$ in $(iii)$. $\hfill \square$
	\end{proof}
	
	Combining this result with the Richter theorem (see \cite[Satz 4]{richter} for an original reference or \cite[Theorem 19]{diDio2018} for a modern statement and historical remarks), we get the following immediate corollary.
	\begin{corollary}\label{cor1}
		If Problem~(\ref{prob1}) is feasible, then the optimal value of~(\ref{prob1}) is attained by an atomic measure with finitely many atoms (at most $n\sum_{i=1}^n N_i+m+3$).
	\end{corollary}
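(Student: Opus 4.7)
The plan is to combine Lemma~\ref{infattained} with Richter's theorem. Lemma~\ref{infattained} guarantees an optimizing measure $\mu^\star$ for~\eqref{prob1}, and Richter's theorem lets us replace $\mu^\star$ by an atomic measure that matches $\mu^\star$ on any prescribed finite family of $\mu^\star$-integrable test functions. Once the right family is chosen, feasibility and optimality of the atomic measure come for free, since every constraint of~\eqref{prob1} and the objective are defined as integrals of test functions against $\mu$.

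The concrete steps are as follows. First, I would collect the finite list of integrands that appear in~\eqref{prob1}: the objective $\varphi$, the option payoffs $f_{i,j}(\textbf{x})=\max(0,x_i-k_{i,j})$ for $i\in[n],\, j\in[N_i]$, the moment-information polynomials $f_\ell$ for $\ell\in[m]$, the constant $1$, and $\|\textbf{x}\|_2^d$. Next, I would verify that each entry is $\mu^\star$-integrable: the payoffs $f_{i,j}$ are dominated by $x_i \le \|\textbf{x}\|_2$, the polynomials $\varphi$ and $f_\ell$ have degree strictly less than $d$ by definition of $d$, and $\|\textbf{x}\|_2^d$ itself is $\mu^\star$-integrable thanks to the bound $\int \|\textbf{x}\|_2^d\,\mathrm{d}\mu^\star \le M$ established in the proof of Lemma~\ref{infattained}. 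With integrability in hand, Richter's theorem (in the form of \cite[Theorem~19]{diDio2018}) produces a finitely supported atomic measure $\nu$ on $\mathbb{R}^n_+$ such that $\int g\,\mathrm{d}\nu = \int g\,\mathrm{d}\mu^\star$ for every $g$ in the list.

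Because all equality constraints, the normalization $\int\mathrm{d}\mu=1$, and the objective of~\eqref{prob1} are preserved by this matching of integrals, and because the inequality $\int\|\textbf{x}\|_2^d\,\mathrm{d}\nu = \int\|\textbf{x}\|_2^d\,\mathrm{d}\mu^\star \le M$ is inherited automatically, $\nu$ is feasible for~\eqref{prob1} and attains the same value as $\mu^\star$. The quantitative bound on the number of atoms then comes directly from the count of test functions used in Richter's theorem: the $\sum_{i=1}^n N_i$ option-payoff functions, the $m$ moment-information functions, and the three remaining functions $\varphi$, $1$, and $\|\textbf{x}\|_2^d$. The main (minor) obstacle is the integrability bookkeeping and the exact counting convention in the version of Richter's theorem being invoked; otherwise the statement is a direct corollary.
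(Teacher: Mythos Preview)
Your proposal is correct and matches the paper's approach exactly: the authors simply state that the corollary follows by combining Lemma~\ref{infattained} with Richter's theorem (citing \cite{richter} and \cite[Theorem~19]{diDio2018}), without spelling out the integrability checks or the enumeration of test functions that you provide. In fact your count of $\sum_{i=1}^n N_i + m + 3$ test functions is tighter than the bound $n\sum_{i=1}^n N_i + m + 3$ stated in the corollary, so your argument actually proves slightly more than what is claimed.
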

	
	We finish this section by showing that finite $d$-th order moments are necessary for the existence of an optimal solution. 
	
	\begin{proposition}
		The last constraint in \eqref{prob1} cannot be omitted in Lemma \ref{infattained}.
	\end{proposition}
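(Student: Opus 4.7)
The plan is to prove the proposition by producing an explicit counterexample, i.e., an instance of \eqref{prob1} (with the last constraint dropped) which is feasible but for which the supremum is not attained. I will take the simplest nontrivial setting: one asset ($n=1$), one observed option price $q>0$ corresponding to a strike $K' > 0$, and the objective payoff $\varphi(x) = \max(0, x - K)$ with $K > K'$. The feasible set is then $\{\mu \in \mathcal{M}(\mathbb{R}_+)_+ : \int d\mu = 1, \ \int \max(0, x - K')\,d\mu = q\}$, and the problem is the supremum of $\int \varphi\,d\mu$ over this set. Feasibility is clear (take any two-point measure $(1-\epsilon)\delta_0 + \epsilon \delta_a$ with $\epsilon(a-K') = q$).

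The first step is to identify the value of the supremum. Since $\max(0, x-K) \le \max(0, x - K')$ pointwise on $\mathbb{R}_+$ whenever $K > K'$, any feasible $\mu$ satisfies $\int \varphi\,d\mu \le q$. The second step is to show this bound is approached: define $\mu_k = (1-\epsilon_k)\delta_0 + \epsilon_k \delta_{a_k}$ with $a_k \to \infty$ and $\epsilon_k = q/(a_k - K')$; a direct computation shows $\mu_k$ is feasible and $\int \varphi\,d\mu_k = q(a_k - K)/(a_k - K') \to q$. Hence the supremum equals $q$.

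The third step, which is the crux of the argument, is to show that no feasible $\mu$ actually attains $\int \varphi\,d\mu = q$. If it did, then the nonnegative integrand $\max(0, x - K') - \max(0, x - K)$ would integrate to zero against $\mu$. This integrand is strictly positive for all $x > K'$, so $\mu$ would have to be supported on $[0, K']$. But on $[0, K']$ the function $\max(0, x - K')$ vanishes, forcing $q = 0$, contradicting $q > 0$. Thus the supremum is not attained. Finally, to tie the example back to the hypothesis of Lemma~\ref{infattained}, one checks that $\int x^d\,d\mu_k = \epsilon_k a_k^d = q a_k^d/(a_k - K') \to \infty$ for any $d \ge 2$, so no finite $M$ makes all $\mu_k$ simultaneously admissible; this is exactly the mechanism by which tightness of the optimizing sequence fails when the moment constraint is dropped.

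I do not anticipate significant obstacles here; the only point to get right is the strict-positivity argument showing non-attainment, which is a one-line pointwise comparison. One could also state the analogous counterexample for the infimum by reversing the roles of $K$ and $K'$, but a single example suffices to establish the proposition.
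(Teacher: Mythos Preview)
Your proposal is correct and follows essentially the same approach as the paper: a one-asset, one-constraint counterexample where the optimizing sequence sends mass to infinity, combined with the pointwise comparison $\max(0,x-K') - \max(0,x-K) \ge 0$ (strict for $x > K'$) to rule out attainment. The only cosmetic difference is that the paper treats the infimum with objective strike smaller than the constraint strike, while you treat the supremum with objective strike larger; these are mirror images of one another, as you yourself note in your final remark.
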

	
	\begin{proof}{}
		Consider the following problem
		
		\begin{equation}\label{exampleGMP3}
		\begin{aligned}
		p^\ast  = \inf & \int_{0}^\infty \max(0, x-k_1)\mathrm{d}\mu  \\
		\text{s.t.} & \int_{0}^\infty \max(0,x-k_2)\mathrm{d}\mu = a \\
		&\int_{0}^\infty \mathrm{d}\mu = 1 \,, \\
		\end{aligned}
		\end{equation}
		where we assume $k_1 < k_2$ and $a \neq 0$. Note that this implies that for the optimal value we have $p^\ast \ge a$. We will show that there exists no measure for which the optimal value is attained. The following is a minimizing sequence for \eqref{exampleGMP3}
		\[ \mu_n = \left(1-\frac{1}{n}\right)\delta_{k_1}+\frac{1}{n}\delta_{k_2+na}\,. \]
		For every $n \in \mathbb{N}$ we see that $\mu_n$ is a probability measure  as it is a convex combination of atomic measures and 
		
		\[ \int_0^\infty \max(0,x-k_2)\mathrm{d}\mu_n = \frac{1}{n}\left( k_2+na - k_2 \right) = a\,. \]
		So the sequence is indeed feasible. For the objective value we get 
		
		\[ \int_0^\infty \max(0,x-k_1)\mathrm{d}\mu_n = \left(1-\frac{1}{n}\right)(k_1-k_1)+\frac{1}{n}\left( k_2+na -k_1 \right) = a + \frac{1}{n}(k_2-k_1)\,. \]
		So we have that $\mu_n$ is a minimizing sequence as it is feasible and converges to $a \le p^\ast$. The limit $\lim_{n \rightarrow \infty} \mu_n = \delta_{k_1}$, however, is not feasible. We now show that there exists no probability measure $\mu \in \mathcal{M}(\mathbb{R}_+)_+$ that is optimal for \eqref{exampleGMP3}. For this we assume that $\mu$ is an optimizer of \eqref{exampleGMP3}. Then we have 
		
		\[ 
		\int_{\mathbb{R}_+} \max(0,x-k_1)\mathrm{d}\mu(x) = a = \int_{\mathbb{R}_+} \max(0,x-k_2)\mathrm{d}\mu(x) \,.
		\]
		Thus, 
		
		\begin{equation*}
		\begin{aligned}
		0 &=	\int_{\mathbb{R}_+} \max(0,x-k_1)\mathrm{d}\mu(x) - \int_{\mathbb{R}_+} \max(0,x-k_2)\mathrm{d}\mu(x) \\
		& = \int_{k_1}^{k_2}\underbrace{(x-k_1)}_{\ge 0} \mathrm{d}\mu(x) + \int_{k_2}^{\infty} \underbrace{(k_2-k_1)}_{>0}\mathrm{d}\mu(x)\,.
		\end{aligned}
		\end{equation*}
		The latter integral must be zero which implies that $\textrm{supp}(\mu) \cap [k_2,\infty) = \emptyset$. But if that is the case we have 
		\[ 
		a = \int_{\mathbb{R}_+} \max(0,x-k_2)\mathrm{d}\mu(x) = \int_{k_2}^{\infty}(x-k_2)\mathrm{d}\mu(x) = 0.
		\]
		Therefore, $\mu$ cannot be feasible. $\hfill \square$
	\end{proof}
	
	This example therefore illustrates that the support of a minimizing sequence can tend to infinity.

	\section{Bounding the support}\label{sec:boundsupport}
	
	By Corollary~\ref{cor1} the optimal solution to \eqref{prob1} is a measure with finitely many atoms. This section is devoted to the question whether it is possible to bound the support of the optimal solution in terms of the problem data of \eqref{prob1}. If this were possible, i.e., if we knew the optimal solution is attained in a box $[0,B]^n$ for some $B \in \mathbb{R}_+$, we could consider a compact version of \eqref{prob1}, where $\mathbb{R}^n_+$ is replaced by $[0,B]^n$. 
	This has the advantage that we know that the Moment-SOS hierarchy converges if the underlying sets are compact (recall Assumption~\ref{archim} in connection with \cite[Theorem 4.10]{lasserre2}).
	It may be possible to derive such a bound by analyzing the core variety associated to the moment functional arising from any optimal solution of (\ref{prob1}). Specifically, in view of Theorem~2.10 of~\cite{blekherman} one should bound a $ B \in \mathbb{R}_+$ such that the core variety associated to the set $[0,B]^n$ and the optimal moment functional of (\ref{prob1}) is nonempty. We carry out the core variety procedure for an artificial example to show how it works and demonstrate that in special cases it is possible to derive a bound on the support in this way. 
	
	\subsection{Approach 1: Core variety}
	
	We follow the notation of Section 1.1. of \cite{blekherman}. 
	Consider the following problem 
	\begin{equation}
	\begin{aligned}
	p^\ast  = \inf & \int_{0}^\infty \max(0, x-k)\mathrm{d}\mu \\
	\text{s.t.} &\int_{0}^\infty \mathrm{d}\mu = 1 \\
	&\int_{0}^\infty x^2\mathrm{d}\mu \le M \,, \\
	\end{aligned}
	\end{equation}
	for some $M >0$. Let $a$ be the objective value of this problem for some feasible measure $\mu^\ast$ and let $m$ be such that we have $\int_{0}^\infty x^2\mathrm{d}\mu^\ast = m \le M$. Define $f = \max(0,x-k)$.
	Let $S$ be the interval $[0,B]$ with $B > 0$ to be determined and let
	\[
	V := \mathrm{span}\{1,f,x^2\}\,.
	\]
	Note that the constant function must be included since we are looking for representing probability measures. Define the linear functional $L: V\to \mathbb{R}$ by
	\[
	L(c_1\cdot 1 + c_2f + c_3x^2) = \int (c_1+c_2f + c_3 x^2) \,\mathrm{d}\mu = c_1 + c_2a + c_3m \,,	\]
	for $(c_1,c_2,c_3) \in \mathbb{R}^3$ and define
	\[
	S_0 := S\,,
	\]
	which is the initial step of the core-variety iterative computation procedure. The next  step is given by
	setting 
	\[
	S_1 = \mathcal{Z}(g \in V \mid L(g) = 0, g \ge 0\; \mathrm{on}\;S_0  )\,,
	\]
	where $\mathcal{Z} (P)$ denotes the set of all common zeros of the functions contained in $P$. The subsequent steps of the core variety computation procedure are given by induction and the core variety itself is the terminal step of this procedure (which is provably finite). Here we prove that $S_1 = S_0 = [0,B]$ whenever $B > k$ if $a = 0$ and $B > \frac{M+\sqrt{M(M-4ak)}}{2a}$ if $a >0$. In order to do so, let $g = c_0 + c_1f +c_2x^2\in V$ be given. The requirement of $L(g) = 0$ means that
	\[
	c_0 + c_1 a +c_2m = 0
	\]
	and hence $c_0 = -c_1 a-c_2m$. Therefore
	\[
	g = c_1(f - a)+c_2(x^2-m) = c_1(\max(0,x-k) - a)+c_2(x^2-m)\,.
	\]
	We need to understand when $g(x) \ge 0$ for all $x \in [0,B]$, i.e., what restriction do we have on the $c_i$'s for $i = 1,2$. 
	
	We distinct two main cases, $a=0$ and $a >0$ and then consider subcases to solve the problem. The aim is to determine a $B \in \mathbb{R}_+$ such that all $g \in V$ satisfying $L(g) = 0$ and $g \ge 0$ on $[0,B]$ are identically zero on $S_0$. This is the case if $c_0 = c_1 = c_2 = 0$ and the core variety procedure terminates.

	\textbf{Case 1:} $a=0$. First, note that if $a = 0$ we have $\sqrt{m} \le k$. The reason is that 
	\[ 
	0 = \int_{0}^\infty \max(0,x-k)\mathrm{}d\mu = \int_{k}^{\infty} (x-k) \,\mathrm{d}\mu
	\]
	and so $\mathrm{supp}(\mu) \cap (k, \infty) = \emptyset$, which implies 
	\[
	m = \int_{0}^{\infty}x^2 \mathrm{d}\mu = \int_{0}^{k}x^2 \mathrm{d}\mu \le k^2 \int_{0}^{k} \mathrm{d}\mu = k^2
	\]
	and so $\sqrt{m} \le k$.
	
	\textbf{Case 1.1. $c_2 >0$.}
	Since $f(0) = 0$, we have $g(0) = -(c_2m)$ and hence $c_2$ cannot be positive.

	\textbf{Case 1.2. $c_1 < 0, c_2 < 0$.}
	Now, for $x > \max(\sqrt{m},k) = k$, both $(x^2-m)$ and $(\max(0,x-k))$ are strictly positive and so $c_1, c_2$ cannot be strictly negative at the same time, since then $g(B)<0$ for $B>k$. 
	
	\textbf{Case 1.3. $c_1>0, c_2<0$.}
	If $\sqrt{m} =k$ set $x>k$.
	If $\sqrt{m} < k+a$ we can simply set $x = \sqrt{m}$ and see that
	\[
	0 \le g(x) = c_1(x-k)+c_2(x^2-k^2) = (x-k)(c_1 + c_2(x+k))\,,
	\]
	which becomes negative if $x \ge -\frac{c_1}{c_2}-k$. Assume $0 \le g(k+\varepsilon)$ for some $\varepsilon > 0$. Then, 
	\[
	0 \le \varepsilon c_1 + c_2(2\varepsilon k + \varepsilon^2) 
	\]
	and so 
	\[
	- \frac{c_1}{c_2} \le 2k+\varepsilon\,,
	\]
	from which follows that the choice $c_1 >0, c_2 <0$ leads to $g(x)<0$ if $x > k+\varepsilon$ for any $\varepsilon > 0$. If instead $\sqrt{m} < k$ we can simply set $x=k$ to find 
	\[
	0 \le g(k) = c_2(k^2-m) < 0\,. 
	\]
	
	For all cases above we found that for $B >k$, the only function $g \in V$ that satisfies $g(x)\ge0$ on $S_0$ is identically $0$. \par
	\textbf{Case 2: $a>0$.}
	In this case we must again check all possibilities for $c_1,c_2$.

	\textbf{Case 2.1: $c_1>0,c_2>0$.}
	For $x = 0$ we see
	\[
	0 \le g(0) = c_1(-a)+c_2(-m) < 0\,.
	\]
	\textbf{Case 2.2: $c_1<0,c_2<0$.}
	For $x \ge \max(\sqrt{m}, k+a)$ we find
	\[
	0 \le g(x) = c_1(x-k-a)+c_2(x^2-m) < 0\,.
	\]
	\textbf{Case 2.3: $c_1>0,c_2<0$.} If $\sqrt{m} < k+a$ we find	
	\[
	g(\sqrt{m}) = c_1 \underbrace{(\max(0, \sqrt{m}-k)-a)}_{<0}+c_2((\sqrt{m})^2-m) <0\,.
	\]
	If $\sqrt{m} \ge k+a$, then note that $0 \le g(0) = c_1 (-a)+c_2(-m)$ from which follows $c_1 \le -c_2 \frac{m}{a}$. Then for $x \ge \sqrt{m}$ we find 
	\[ 
	0 \le c_1(x-k-a)+c_2(x^2-m) \le c_2\left(x^2-\frac{m}{a}x+\frac{mk}{a}\right)\,. 
	\]
	The content of the brackets is positive for 
	\[
	x > \frac{m+\sqrt{m(m-4ak)}}{2a}\,,
	\]
	and note the term under the square root is positive because $m \ge k^2+2ak+a^2$.
	
	\textbf{Case 2.4. $c_1<0,c_2>0$.}
	Let $\sqrt{m} > k+a$. Setting $x = k+a$ we find $g(k+a) = c_2((k+a)^2-m)<0$. Thus, consider the case where $\sqrt{m} \le k+a$. 
	Note that we can deduce $\frac{c_1}{c_2} \le - \frac{m}{a}$ from $g(0)\ge0$. Let $x > k$. We want to check for what $x$ we have
	$c_1(x-k-a)+c_2(x^2-m)<0$. This is the case if 
	\[
	\frac{c_1}{c_2} \le -\frac{m}{a} < -\frac{x^2-m}{x-k-a}\,.
	\]
	We are looking for the smallest root $x^\ast$ of $x^2-\frac{m}{a}x+\frac{mk}{a}$ such that $x^\ast>k$. The roots are given by 
	\[
	x_{1,2} = \frac{m\pm \sqrt{m(m-4ak)}}{2a}\,.
	\] 
	We show
	\[
	x_1 = \frac{m - \sqrt{m(m-4ak)}}{2a} > k\,.
	\]
	For this note that 
	\[
	\frac{m - \sqrt{m(m-4ak)}}{2a} > k \quad \Leftrightarrow  \quad m - 2ak > \sqrt{m^2-4amk}\,.
	\]
	Squaring both sides and cleaning up we see this is true.
	
	In conclusion, if $B > \frac{M+\sqrt{M(M-4ak)}}{2a}$ the core variety procedure terminates after the first step and the support of the corresponding measure lies in $[0,B]$.

	\subsection{Approach 2: Atomic representation}
	
	We now present a different approach to the problem of bounding the support to verify the bound we obtained before. Consider the univariate, i.e. $n = 1$ case for problem \eqref{prob1} and assume it is feasible and $m = 0$. Also let the strike prices be ordered, i.e. $k_1 \le k_2 \le \dots \le k_{N_1}$. Therefore we have $N_1$ equality constraints, each corresponding to the observable price of a vanilla option on the considered asset. We further assume the payoff function to be the payoff of a European call option, i.e. $\varphi(x) = \max(0, x-k)$ for some $k \in \mathbb{R}_+$. By Corollary \eqref{cor1} there exists an atomic solution of the form $\sum_{j = 1}^{m}\alpha_j \delta_{x_j}$. Let $0 \le x_1 \le x_2 \le \dots \le x_m$. The following lemma shows that we may assume w.l.o.g.~that $x_{m-1} \le k_{N_1} \le x_{m}$.
	
	\begin{lemma}\label{location}
		Consider \eqref{prob1} for $n = 1$ and $m = 0$. If there exists an optimal solution, then there exists one such that exactly one atom $x^{(i)}$ lies in $(k_{N_1}, \infty)$. Moreover, there exists a solution such that in each of the intervals 
		\[ 
		[0,k_1], [k_1,k_2], \dots, [k_{N_1-1},k_{N_1}], [k_{N_1}, \infty)
		\]
		there is at most one atom.
	\end{lemma}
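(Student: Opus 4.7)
The plan is to start from an atomic optimal solution $\mu^{\star} = \sum_{i=1}^{p} \alpha_i \delta_{x^{(i)}}$ of \eqref{prob1}, whose existence in the $n=1$, $m=0$ case is guaranteed by Corollary~\ref{cor1}, and to redistribute its atoms via a local consolidation argument on each of the intervals $[0, k_1], [k_1, k_2], \ldots, [k_{N_1 - 1}, k_{N_1}], (k_{N_1}, \infty)$. The crucial observation is that on each such interval $I$, every constraint integrand $x \mapsto \max(0, x - k_j)$ is affine: it is identically zero on intervals lying entirely to the left of $k_j$ and equal to $x - k_j$ on intervals lying entirely to its right. Consequently, the contribution of $\mu^{\star}|_I$ to each equality constraint depends only on the total mass $\mu^{\star}(I)$ and on the first moment $\int_I x\, d\mu^{\star}$.

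Whenever $\mu^{\star}$ has multiple atoms in some interval $I$, I would replace them by a single atom of the same total weight located at the weighted barycenter $\bar{x} \in I$. This substitution preserves all $N_1$ equality constraints and the total mass constraint exactly, and by Jensen's inequality applied to the convex function $x \mapsto x^d$ (with $d = 2$ in this setting), the value of $\int x^d\,d\mu$ can only decrease, so $\int x^d\,d\mu \le M$ remains satisfied. When the objective $\max(0, x - k)$ is affine on $I$, that is, whenever $k \notin (k_{j-1}, k_j)$, the objective value is preserved exactly. Iterating the consolidation across all intervals produces an optimal feasible measure with at most one atom per interval. To obtain that at least one atom lies in $(k_{N_1}, \infty)$, I would rewrite the last equality constraint as $\int_{(k_{N_1}, \infty)} (x - k_{N_1})\,d\mu = q_{N_1}$; when $q_{N_1} > 0$ this forces $\mu^{\star}((k_{N_1}, \infty)) > 0$, and combined with the consolidation step exactly one atom remains in the tail.

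The hard part is the subcase where the payoff's kink $k$ lies strictly inside some bounded interval $[k_{j-1}, k_j]$; there the objective is only piecewise affine on the interval, and in the maximization problem Jensen's inequality applied to the convex $\max(0, x - k)$ would make the consolidation step strictly decrease the objective, which is not permissible. To deal with this, I plan to treat $k$ as an auxiliary strike by imposing the additional equality constraint $\int \max(0, x - k)\,d\mu = p^{\star}$, where $p^{\star}$ denotes the optimal value of \eqref{prob1}. This constraint is automatically satisfied by any optimal $\mu^{\star}$, so the augmented problem has the same set of optima; its natural partition now includes $k$ only on interval boundaries, which restores affinity of both the constraints and the objective on each refined subinterval and allows the consolidation argument to go through. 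Translating the resulting structure back to the coarser partition listed in the lemma then yields the claimed at-most-one-atom property on each $[k_{j-1}, k_j]$.
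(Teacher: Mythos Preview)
Your overall strategy coincides with the paper's: both argue that $q_{N_1}>0$ forces mass in $(k_{N_1},\infty)$ and then replace several atoms lying in a common interval by a single atom at their weighted barycenter, using that every constraint function $\max(0,x-k_i)$ is affine on each such interval and that Jensen's inequality takes care of the $d$-th-moment bound. The paper in fact dispatches the entire second claim with the phrase ``by similar reasoning'', so you are being more scrupulous than the authors when you isolate the maximization subcase in which the objective kink $k$ lies strictly inside some $[k_{j-1},k_j]$.

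That subcase, however, is where your proposal has a genuine gap. After adjoining the auxiliary equality $\int\max(0,x-k)\,\mathrm{d}\mu=p^\star$ and consolidating on the refined partition, the interval $[k_{j-1},k_j]$ has been split into $[k_{j-1},k]$ and $[k,k_j]$, each of which may still carry one atom. The sentence ``translating the resulting structure back to the coarser partition'' is not an argument: the only one-atom replacement of two atoms that preserves their total mass and first moment---and hence all affine constraints on $[k_{j-1},k_j]$---is the barycenter, and by your own Jensen observation this strictly lowers the convex objective whenever the two atoms straddle $k$, thereby violating the very constraint $\int\max(0,x-k)\,\mathrm{d}\mu=p^\star$ you just imposed. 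So a purely local merge on $[k_{j-1},k_j]$ is impossible in general for the supremum, and you would need a global redistribution across intervals to reach one atom per original interval; this is not supplied. For the infimum no refinement trick is needed at all: barycenter consolidation weakly \emph{decreases} the convex objective, so the merged measure remains optimal and the claim follows directly on the original partition.
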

	
	\begin{proof}{}
		For the first claim asserting that there exists an optimal measure $\mu^\ast$ such that exactly one atom lies in $(k_{N_1}, \infty)$, let us assume that all atoms lie in $[0,k_{N_1}]$. Then 
		
		\[ 
		a_{N_1} = \int_{\mathbb{R}_+} \max(0,x-k_{N_1})\mathrm{d} \mu^\ast = \int_{0}^{k_{N_1}} \max(0,x-k_{N_1})\mathrm{d} \mu^\ast =  0\,, 
		\]
		which is a contradiction. Thus, at least one atom lies in $(k_{N_1}, \infty)$. Suppose two atoms lie in $(k_{N_1}, \infty)$ and let the associated weighted Dirac measures be $\alpha \delta_{x_1}$ and $\beta \delta_{x_2}$ with $\alpha, \beta > 0$ and $x_1, x_2 > k_{N_1}$. Now, since $x_1,x_2 > k_{N_1}$ these two Dirac measures influence every constraint of \eqref{prob1} as well as the objective because all input functions are strictly positive at $x_1, x_2$. Their influence is exactly 
		\[ 
		\alpha (x_1-k_i)+\beta(x_2-k_i) = (\alpha+\beta)\left( \frac{\alpha}{\alpha + \beta}x_1+\frac{\beta}{\alpha+\beta}x_2-k_i\right)\,. 
		\]
		It follows that these two Dirac measure can be combined to a single one with weight $\omega = \alpha + \beta > 0$ and support $x =  \frac{\alpha}{\alpha + \beta}x_1+\frac{\beta}{\alpha+\beta}x_2> k_{N_1}$ without changing the influence on the data. Also, because $\|\cdot\|^d$ is convex, the inequality constraint is also satisfied. By similar reasoning one can prove the second claim of the lemma. $\hfill \square$
	\end{proof}

	We deduce that therefore $\alpha_m (x_m-k_{N_1}) = a_{N_1}$. We also know $\alpha_m x_m^2 \le M \Leftrightarrow \alpha_m \le M/x_m^2$, from which follows that $a_{N_1} \le M/x_m^2(x_m-k_{N_1})$. Hence, 
	\[
	x_m \le \frac{M+\sqrt{M(M-4a_{N_1}k_{N_1})}}{2a_{N_1}} =: B\,.
	\]
	In the univariate case the support of an optimal solution lies in $[0,B]$. 
	
	\begin{remark}
		We would like to remark that the model we consider is still NP-hard in general. 
		There is no shift of complexity to the task of finding $B$. In comparison, in \cite{bertsimas} the authors propose to relax the set of feasible measures from $\mathcal{M}(\mathbb{R}^n_+)$ to $\mathcal{M}(\mathbb{R}^n)$, i.e., the
		set of Martingale measures over $\mathbb{R}^n_+$ vs. $\mathbb{R}^n$. They henceforth 
		identify cases in which the relaxation may be solved in polynomial time, and they state that the 
		result is not necessarily an optimal bound to the original problem. Our approach on the other hand is proven to 
		converge to the optimal bound. To actually compare the two approaches we first point out that Bertsimas and 
		Popescu do not introduce a hierarchy, but a single relaxation of the problem. The relaxation they propose could 
		be tackled using the standard Moment-SOS hierarchy. However, since they relax the local non-negativity to global non-negativity, the optimal solution to any level of the Moment-SOS hierarchy 
		applied to their relaxation will always be contained in the set of feasible solutions belonging to our approach. 
		Hence, our bound will always lie at least as close to the optimal bound as theirs. 
	\end{remark}

	\section{Examples for outer range}\label{sec:examplesOuter}
	
	We will now present some examples of numerical computations of bounds on option prices in the framework specified in the previous sections. The Moment-SOS hierarchy provides a lower bound to the minimization problem and an upper bound to the maximization problem, which is why we call these outer bounds. The implementation was coded in the Julia programming language and we used the MOSEK solver \cite{mosek} version 9.1.9. The code is available online\footnote{\url{ https://github.com/FelixKirschner/boundingOptionPricesCode}} and relies partly on the Julia package MomentOpt.jl \cite{momOpt}.

	\subsection{Univariate case}
	Let us describe our implementation strategy for the univariate case. Assume we want to find bounds on the price of an option with strike $k$ given strikes and prices of other options on the same asset, i.e., the following problem:
	
	\begin{equation} \label{numexp}
	\begin{aligned}
	\sup_{\mu \in \mathcal{M}(\mathbb{R}_+)_+} / \inf_{\mu \in \mathcal{M}(\mathbb{R}_+)_+} \; & \int_{\mathbb{R}_+} \max(0, x-k) \mathrm{d}\mu(x) \\
	\text{ s.t. } &\int_{\mathbb{R}_+} \max(0, x-k_{i}) \mathrm{d}\mu(x) = a_i\,, \text{ for } i \in [n]   \\
	& \int_{\mathbb{R}_+} \mathrm{d}\mu(x) = 1  \\
	& \int_{\mathbb{R}_+} x^2 \mathrm{d}\mu(x) \le M\,.
	\end{aligned}
	\end{equation}
	Since we know from Lemma~\ref{infattained} that feasibility implies the existence of an optimal solution, we will assume the optimal solution will be attained in a box $[0,B]$ for some $B \in \mathbb{R}$. A suitable $B$ can be obtained via the procedure described in section~\ref{sec:boundsupport}.\\
	To circumvent the problem of dealing with piecewise affine functions we split the interval $[0,B]$ into subintervals and define measures supported on each of the subintervals. For this let $m$ be the index such that $k_m < k < k_{m+1}$. We define intervals $[0,k_1], [k_i,k_{i+1}]$ for $i = 1, \dots, m-1$, as well as $[k_m,k], [k,k_{m+1}]$ and $[k_j,k_{j+1}]$ for $j=m+1,n-1$ and finally $[k_n,B]$. The situation is visualized in Figure~\ref{fig:univariateSlices}
	\begin{figure}
		\centering
		\begin{tikzpicture}[thick,scale=0.9, every node/.style={scale=0.8}]\label{fig:univariateSlices}
		\draw (1,1) -- (4.5,1); 
		\draw (5.5,1) -- (11.5,1); 
		
		\draw[->] (12.5,1) -- (15,1); 
		\draw (1,0.8) -- (1,1.2);
		
		\node at (5,1) {$\dots$};
		\node at (12,1) {$\dots$};
		
		\draw[dashed] (1,1.2) -- (1,2.5);
		
		\node at (1,0.5) {$0$};
		
		\node at (1.55, 2) {$[0,k_1]$}; 
		
		\draw (2.15,0.8) -- (2.15,1.2);
		\node at (2.15,0.5) {$k_1$};
		
		\draw[dashed] (2.15,1.2) -- (2.15,2.5);
		
		\node at (2.75, 2) {$[k_1,k_2]$};
		
		\draw (3.35,0.8) -- (3.35,1.2);
		\node at (3.35,0.5) {$k_2$};
		
		 \draw[dashed] (3.35,1.2) -- (3.35,2.5);
		
		\node at (5, 2) {$[k_2,k_3] \dots [k_{m-1},k_m]$};
		
		\draw (6.75,0.8) -- (6.75,1.2);
		\node at (6.75,0.5) {$k_m$};
		
		\draw[dashed] (6.75,1.2) -- (6.75,2.5);
		
		\node at (7.5, 2) {$[k_m,k]$};
		
		\draw (8.25,0.8) -- (8.25,1.2);
		\node at (8.25,0.5) {$k$};
		
		\draw[dashed] (8.25,1.2) -- (8.25,2.5);
		
		\node at (9.175, 2) {$[k,k_{m+1}]$};
		
		\draw (10,0.8) -- (10,1.2);
		\node at (10,0.5) {$k_{m+1}$};
		
		\draw[dashed] (10,1.2) -- (10,2.5);
		
		\node at (12,2) {$[k_{m+1},k_{m+2}] \dots [k_n,B]$};
		
		\draw (14,0.8) -- (14,1.2);
		\node at (14,0.5) {$B$};
		
		\draw[dashed] (14,1.2) -- (14,2.5);
		\end{tikzpicture}
		\caption{Visualization of the segmentation of the interval $[0,B]$}
	\end{figure}
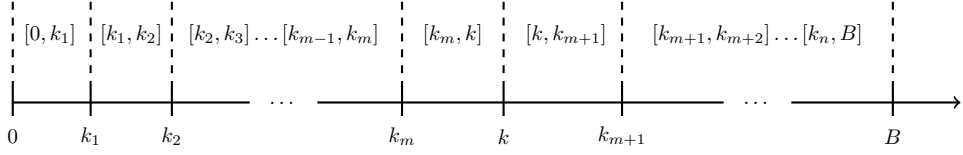

	Let $S$ be the collection of these subsets. Elements in $S$ are pairwise disjoint and the union of all sets in $S$ is $[0,B]$. The collection $S$ contains $n+2$ intervals and to each one is assigned a measure $\mu_i$ for $i = 1, \dots, n+2$. This way we can formulate a problem equivalent to \eqref{numexp}.
	
	\begin{equation} \label{numexp2}
	\begin{aligned}
	\sup / \inf \; & \sum_{i = m+2}^{n+2} \int (x-k) \mathrm{d}\mu_i(x) \\
	\text{ s.t. } &\sum_{i=j+2}^{n+2} \int (x-k_{j}) \mathrm{d}\mu_i(x) = a_j\,, \text{ for } j = m+1, \dots, n  \\
	&\sum_{i=j+1}^{n+2} \int (x-k_{j}) \mathrm{d}\mu_i(x) = a_j\,,\text{ for } j \in [m] \\
	& \sum_{i=1}^{n+2} \int x^2\mathrm{d}\mu_i(x) \le M  \\
	& \sum_{i=1}^{n+2} \int\mathrm{d}\mu_i(x) = 1 \,. \\
	\end{aligned}
	\end{equation}
	
	Let $s_i = [s_{i_1},s_{i_2}]$ for $i = 1, \dots, n+2$ be the elements of $S$. Introduce a linear operator $L_i^r : \mathbb{R}[x]_{2r} \rightarrow \mathbb{R}$ for every $\mu_i$. The level $r$ relaxation is then given by 
	\begin{equation} \label{relax1}
	\begin{aligned}
	\sup / \inf \; & \sum_{i = m+2}^{n+2} L_i^r(x-k) \\
	\text{ s.t. } &\sum_{i=j+2}^{n+2} L_i^r( x-k_{j} ) = a_j\,, \text{ for } j = m+1, \dots, n   \\
	&\sum_{i=j+1}^{n+2} L_i^r(x-k_{j}) = a_j\,, \text{ for } j\in[m]   \\
	& \sum_{i=1}^{n+2} L_i^r(x^2) \le M  \\
	& \sum_{i=1}^{n+2} L_i^r(1) = 1  \\
	& L_i^r([x]_r[x]_r^T) \in \mathcal{DNN} \,, \text{ for } i \in [n+2]  \\
	& L_i^r((s_{i_2}-x)(x-s_{i_1})[x]_{r-1}[x]_{r-1}^T) \in \mathcal{DNN} \,, \text{ for } i \in [n+2]\,,
	\end{aligned}
	\end{equation}
	where $\mathcal{DNN}$ is the doubly nonnegative cone, i.e.,  $\mathbb{S}^n_+ \cap \mathbb{R}_+^{n \times n}$ and the operator $L_i^r$ is applied entry-wise to the matrices $[x]_r[x]_r^T$. The decision variables here are the linear operators $L_i^r$. By introducing a variable $y_\alpha^{(i)} = L_i^{r}(\textbf{x}^\alpha)$, for $i \in [n+2], \alpha \in \mathbb{N}^n_r$ problem~\eqref{relax1} becomes a regular semidefinite program. For the actual calculation it is expedient to normalize everything, divide the given data by $B$. Consider problem~\eqref{numexp} with the data displayed in Table~\ref{tab:Table1} and with $k = 105$ and with $M = 200\,000$. Using the relaxation given in \eqref{relax1} we can approximate the optimal solution and we find the first level is tight, meaning we obtained the optimal bounds proposed by Bertsimas and Popescu in \cite{bertsimas}. For the considered case we get a lower bound of $3.875$ and an upper bound of $5.125$ and the computation took $0.01$ seconds.
	
	The domain in this problem is partitioned into $7$ parts. For each part we define a measure for each of which we introduce moment variables $y_{\alpha}^{(i)}$ for $\alpha \in \mathbb{N}^n_{2r+d_{\max}}$, where 
	\[
		d_{\max} = \max_{i \in [n], j\in [N_i], \ell \in [m]}\{ \textrm{deg}(\varphi), \textrm{deg}(f_{i,j}), \textrm{deg}(f_\ell) \}.
	\]
	Thus, for this particular problem, we introduced $7 \times 5 = 35$ variables. The number of involved matrices was $7 \times 2 = 14$, each of size $2 \times 2$. In total we had $14$ linear matrix inequality (LMI) constraints, $6$ equality constraints as well as $1+35 = 36$ inequality constraints, one to ensure finite $d$-th order moments and one for each variable to ensure $y_{\alpha}^{(i)}\ge 0$.
	\begin{table}
		\centering
		\begin{tabular}{|c|c|c|c|c|c|}
			\hline
			$i$ & $1$ & $2$ & $3$ & $4$ & $5$ \\
			\hline
			$k_i$ & $95$ & $100$ & $110$ & $115$ & $120$ \\
			\hline
			$a_i$ & $12.875$ & $8.375$ & $1.875$ & $0.625$ & $0.25$ \\
			\hline
		\end{tabular}
		\caption{Prices of European call options on the Microsoft stock from July '98 with strikes $k_i$}
		\label{tab:Table1}
	\end{table}
	
	\subsection{Explicit examples with two assets}
	
	Consider the following artificial example where we want to compute bounds on the price of a basket option on a basket with two assets whose prices are given by $x_1$ and $x_2$, respectively. As a payoff function we choose $\max(0, 1/2x_1 + 1/2x_2-K)$. We assume we can observe the prices of two single call options on each asset. The corresponding optimization program is given in \eqref{twoassets} below.

	\begin{equation}\label{twoassets}
	\begin{aligned}
	\sup_{\mu \in \mathcal{M}(\mathbb{R}^2_+)_+} / \inf_{\mu \in \mathcal{M}(\mathbb{R}^2_+)_+}& \; \int_{\mathbb{R}^2_+}\max\left(0, \frac{1}{2}x_1+\frac{1}{2}x_2-K\right)\mathrm{d} \mu(\textbf{x}) \\
	\text{s.t. } & \; \int_{\mathbb{R}^2_+} \max(0, x_i-k_{x_i,j}) \mathrm{d}\mu(\textbf{x}) = a_{x_i,j} \,, \text{ for } i,j = 1,2\\
	& \; \int_{\mathbb{R}^2_+} \|\textbf{x}\|_2^2 \mathrm{d}\mu(\textbf{x}) \le M \\
	& \; \int_{\mathbb{R}^2_+}\mathrm{d}\mu(\textbf{x})=1.
	\end{aligned}
	\end{equation}
	To solve this numerically we slice up the domain into an irregular grid along the kinks of the $\max$-functions, under the assumption that the support of the optimal solution is contained in $[0,B]^2$ for some $B \in \mathbb{R}$. The domain then may look as depicted in Figure~\ref{fig:gridTiles}, where the dotted lines indicate where the objective ascends from 0, i.e., where $0.5x_1+0.5x_2-K = 0$. We index the tiles from bottom to top, left to right. 
	\begin{figure}
		\centering
		\begin{tikzpicture}[thick,scale=0.9, every node/.style={scale=0.8}]\label{Grid split}
		\draw[ ->] (1,1) -- (11.5,1); 
		\draw   (11,0.9) -- (11,11.5);
		\draw	(11.5,11) -- (0.9,11); 
		\draw[thick, ->] (1,1) -- (1,11.5); 
		\draw (4,0.9) -- (4,11.5); 
		\draw (9,0.9) -- (9,11.5); 
		\draw (0.9,5) -- (11.5,5); 
		\draw (0.9,7.5) -- (11.5,7.5); 
		\draw[dotted] (1,10.5) -- (4,9);
		\draw[dotted] (4,9) -- (9,4);
		\draw[dotted] (10.5,1) -- (9,4);
		\node at (11,0.7) {$B$};
		\node at (0.7,11) {$B$};
		\node at (0.7,10.5) {$2K$};
		\node at (10.5,0.7) {$2K$};
		\node at (0.4,7.5) {$k_{x_2,2}$};
		\node at (0.4,5) {$k_{x_2,1}$};
		\node at (9,0.6) {$k_{x_1,2}$};
		\node at (4,0.6) {$k_{x_1,1}$};
		\node at (2,2 ) {$1$};
		\node at (2,6 ) {$2$};
		\node at (2.5,8.5 ) {$3$};
		\node at (3,10 ) {$4$};
		\node at (7,2 ) {$5$};
		\node at (8.75,4.6 ) {$6$};
		\node at (6,6 ) {$7$};
		\node at (8,6.5 ) {$8$};
		\node at (4.5,8 ) {$9$};
		\node at (7,10 ) {$10$};
		\node at (9.5,1.75 ) {$11$};
		\node at (10.5,3 ) {$12$};
		\node at (10,6 ) {$13$};
		\node at (10,9 ) {$14$};
		\end{tikzpicture}
		\caption{Example of how the support might be split}
		\label{fig:gridTiles}
	\end{figure}
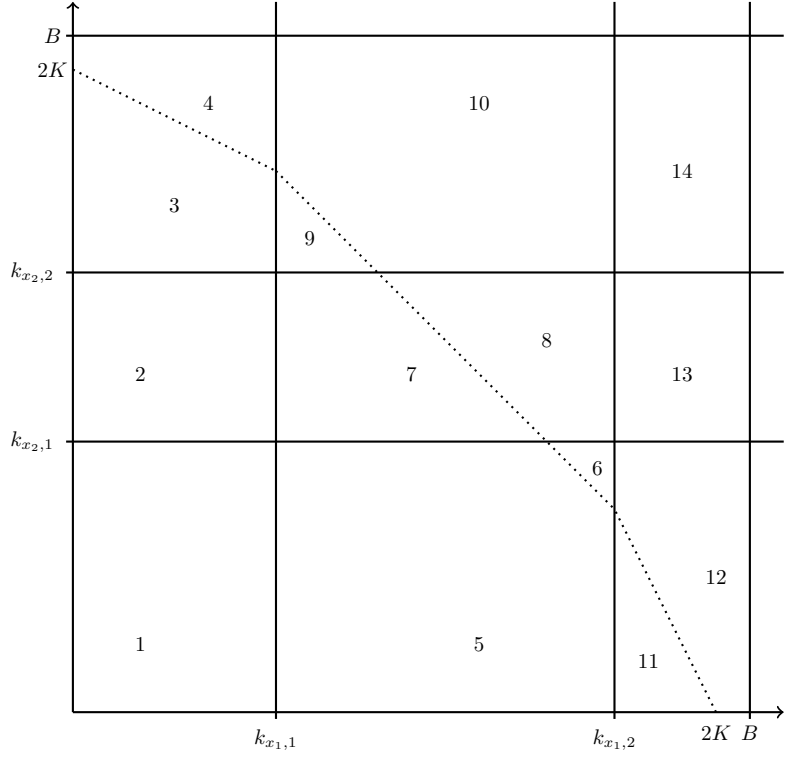
	For each tile $i$ in the grid we introduce a new measure $\mu_i$. For example for tile $12$ in Figure~\ref{fig:gridTiles} we get
	\begin{align*}
	\mathrm{supp}(\mu_{12}) = \{ \textbf{x} \in \mathbb{R}^2: &\  (B-x_1)(x_1-k_{x_1,2})\ge0\,,  \ x_2(k_{x_2,1}-x_2)\ge0 \,, \\
	& \ 1/2x_1+1/2x_2-K \ge0 \} \,.
	\end{align*}
	%
	Consider the following $(\text{strike}, \text{price})$ pairs
	\begin{itemize}
		\item $x_1$: $(100,12)$, $(110,3)$
		\item $x_2$: $(102,10)$, $(107,6)$
	\end{itemize}
	and let $M = 200\,000$, $B = 400$ and $K = 105$.
	
	Applying the above described procedure to problem~\eqref{twoassets} with the data given above results in problem~\eqref{numexp3}. Note that \eqref{twoassets} and \eqref{numexp3} are equivalent. With respect to Figure~\ref{fig:gridTiles} the index sets $J_i$ for $i = 0, 1, \dots, 4$ correspond to the sets on which the functions which define problem~\eqref{twoassets} are not identically zero, i.e., 
	\begin{itemize}
		\item $\max(0, \frac{1}{2}x_1+\frac{1}{2}x_2-K) = \frac{1}{2}x_1+\frac{1}{2}x_2-K$ on $J_0 = \{4,6,8,10,12,13,14\}$
		\item $\max(0,x_1-k_{x_1,1}) = x_1-k_{x_1,1}$ on $J_1 = \{ 5, 6, \dots, 14\}$
		\item $\max(0,x_1-k_{x_1,2}) = x_1-k_{x_1,2}$ on $J_2 = \{ 11, \dots, 14\}$
		\item $\max(0,x_2-k_{x_2,1}) = x_2-k_{x_2,1}$ on $J_3 = \{ 2, 3, 4, 7, 8, 9, 10, 13, 14\}$
		\item $\max(0,x_2-k_{x_2,2}) = x_2-k_{x_2,2}$ on $J_4 = \{ 3, 4, 9, 10, 14\}$.
	\end{itemize}
	Thus, we obtain the following problem:
	\begin{equation} \label{numexp3}
	\begin{aligned}
	\sup_{\mu_i}/\inf_{\mu_i}  \; & \sum_{i \in J_0} \int \left(\frac{1}{2}x_1+\frac{1}{2}x_2-K\right) \,\mathrm{d}\mu_i(\textbf{x}) \\
	\text{ s.t. } &\sum_{j \in J_1 } \int (x_1-k_{x_1,1})\, \mathrm{d}\mu_j(\textbf{x}) = a_{x,1}   \\
	&\sum_{j \in J_2 } \int (x_1-k_{x_1,2})\, \mathrm{d}\mu_j(\textbf{x}) = a_{x_1,2} \\
	&\sum_{j \in J_3 } \int (x_2-k_{x_2,1}) \,\mathrm{d}\mu_j(\textbf{x}) = a_{x_2,1} \\
	&\sum_{j \in J_4 } \int (x_2-k_{x_2,2})\, \mathrm{d}\mu_j(\textbf{x}) = a_{x_2,2} \\
	& \sum_{i=1}^{14} \int\mathrm{d}\mu_i(\textbf{x}) = 1 \\
	& \sum_{i=1}^{14} \int x_1^2+x_2^2\,\mathrm{d}\mu_i(\textbf{x}) \le M\,. \\
	\end{aligned}
	\end{equation}
	Applying the Moment-SOS hierarchy to this problem and solving the first level results in an upper bound of $7.4$ and a lower bound of $2.387$, which are in the optimal values of \eqref{twoassets}. 
	The SDP consisted of $14 \times 15 = 210$ variables, $80$ LMIs involving matrices of size $3 \times 3$, $4$ equality constraints and $211$ inequality constraints.

	\subsubsection*{Varying strikes}
	We are now going to give an example to see how changing the strike price affects the optimal values of the optimization problems. Consider the data presented in Table~\ref{tab:Table2} and let the objective function be $\max(0, 1/2x_1+1/2x_2-K)$, $B = 400$ and 
	$M=200\,000$. The optimal values are given in Table~\ref{tab:Table4}. All values stem from the first level of the Moment-SOS hierarchy and increasing the level up to level $10$ did not change the objective values. 
	For each of the strike prices specified in Table \ref{tab:Table2} the resulting programm for the first level of the hierarchy consisted of $47 \times 15 = 705$ variables, $257$ LMIs, $11$ equality constraints and $706$ inequality constraints. All moment and localizing matrices are of size $3 \times 3$.
	
	\begin{table}
		\centering
		
		\begin{tabular}{|c||c|c|c|c|c|}
			\hline
			$i$ & $1$ & $2$ & $3$ & $4$ & $5$ \\
			\hline
			$k_{x_1,i}$ & $90$ &  $95$ & $100$ & $110$ & $120$\\
			\hline
			$a_{x_1,i}$ & $20$ & $15.5$ & $12$ & $5.5$ & $1$\\
			\hline
			$k_{x_2,i}$ & $90$ & $96$ & $102$ & $107$ & $115$\\
			\hline
			$a_{x_2,i}$ & $20.5$ & $15$ & $10$ & $6$ & $0.75$\\
			\hline
		\end{tabular}
		\caption{Strikes and corresponding prices for European call options}
		\label{tab:Table2}
	\end{table}

	\begin{table}
		\centering
		\begin{tabular}{|c|c|c|c|c|c|c|}
			\hline
			$K$ & $90$ & $95$ & $100$ & $105$ & $110$ & $115$ \\
			\hline
			\hline
			lower bound on price & $16.875$ & $12.792$ & $8.708$ & $4.625$ & $1.675$ & $0.0$\\
			\hline
			computation time [s] & $0.12$ & $0.14$ & $0.13$ & $0.15$ & $0.14$ & $0.13$ \\
			\hline
			\hline
			upper bound on price & $20.25$ & $15.7$ & $11.55$ & $8.016$ & $4.75$ & $2$ \\
			\hline
			computation time [s] & $0.12$ & $0.14$ & $0.16$ & $0.15$ & $0.15$ & $0.14$ \\
			\hline
		\end{tabular}
		\caption{Optimal lower and upper bounds w.r.t.~the data given in Table~\ref{tab:Table2}}
		\label{tab:Table4}
	\end{table}

	
	
	
	\subsection*{Currency Basket}
	A currency basket is simply a way to determine the value of a national currency by calculating the weighted average of exchange rates of selected foreign currencies. These objects became popular in 1971 after the abolition of the gold standard. Options on currency baskets are attractive tools for multinational corporations to manage exposure to multiple currencies. Consider the following currency basket option on Euro and British Pounds in US Dollars. For both EUR/USD and GBP/USD two options are observable in the form $(strike, price)$:
	
	\begin{itemize}
		\item EUR/USD: $\{ (135.5,2.77), (138.5,1.17) \}$
		\item GBP/USD: $\{ (116,2.21), (119,0.67)\}$
	\end{itemize}
	
	We choose the weights $(2/3, 1/3)$ for the objective function, i.e. $\varphi(\textbf{x}) = \max(0, 2/3x_1+1/3x_2-K)$ and we  compute bounds for different values of $K$. We obtain an optimization problem similar to \eqref{twoassets}. The optimal values for the first level of the hierarchy are shown in Table~\ref{tab:TableCurrency}. 
	In the corresponding optimization problem the domain is partitioned into $14$ sets, for each of which $15$ moment variables are introduced. In total there are $14 \times 15 = 210$ variables, $80$ LMIs each involving a matrix of size $3 \times 3$, $211$ inequality constraints and $5$ equality constraints.
	For this particular example, it is clear that the bounds are not very useful in practice. This is, however, not due to our approach but to the number of data point given. In practice, there are more observable options available, improving the bounds that can be obtained. 
	
	\begin{table}
		\centering
		\begin{tabular}{|c|c|c|c|c|c|c|}
			\hline
			$K$ & $100$ & $105$ & $110$ & $115$ & $120$  \\
			\hline
			\hline
			lower bound on price & $1.4933$ & $1.2599$ & $1.0266$ & $0.7933$ & $0.56$  \\
			\hline
			computation time [s] & $0.22$ & $0.23$ & $0.22$ & $0.20$ & $0.16$  \\
			\hline
			\hline
			upper bound on price & $31.5834$ & $26.5833$ & $21.5833$ & $16.5833$ & $11.5833$ \\
			\hline
			computation time [s] & $0.15$ & $0.16$ & $0.18$ & $0.18$ & $0.16$  \\
			\hline
		\end{tabular}
		\caption{Optimal lower and upper bounds for a currency basket option with different strikes for level $r = 1$}
		\label{tab:TableCurrency}
	\end{table}
	
	\subsection*{Example from Boyle and Lin \cite{boyle}}
	
	In this example we compute bounds for a different type of option. We assume we only have data like mean, variance and correlation of the assets under the risk-neutral pricing measure available, instead of observable option prices with different strikes. The type of option is specified through the payoff function, which will be given by $\max(0, \max(x_1, \dots, x_n)-K)$ in this case. This type of option is called {\em call on max}. It is based on $n$ assets $S_1, \dots, S_n$, and gives the owner the right to buy the asset which at maturity is the most valuable for the predetermined strike $K$. 
	
	The data in the following example is taken from Boyle and Lin \cite{boyle}, where they introduced a different method to compute upper bounds. Consider three assets with means $(44.21,44.21,44.21)$ and the covariance matrix  given by 
	\[
	C = \begin{bmatrix}
	184.04 & 164.88 & 164.88 \\
	164.88 & 184.04 & 164.88 \\
	164.88 & 164.88 & 184.04
	\end{bmatrix}.
	\]
	Then, in our setting, the smallest upper bound on the price on the call on max option on these three assets is the optimal value of the following optimization problem:
	
	\begin{equation}\label{twoassetsCor}
	\begin{aligned}
	\sup_{\mu \in \mathcal{M}(\mathbb{R}^3_+)_+}  \; &\int_{\mathbb{R}^3_+}\max\left(0,\max(x_1,x_2,x_3)-K\right)\mathrm{d} \mu(\textbf{x}) \\
	\text{s.t. } & \; \int_{\mathbb{R}^3_+} x_i \mathrm{d}\mu(\textbf{x}) = 44.21\,, \text{for } i = 1,2,3\\
	& \; \int_{\mathbb{R}^3_+} (x_i-44.21)(x_j-44.21) \mathrm{d}\mu(\textbf{x}) = C_{i,j} \,, \text{for } i,j = 1,2,3 \\
	& \; \int_{\mathbb{R}^3_+} \|\textbf{x}\|_2^2 \mathrm{d}\mu(\textbf{x}) \le M \\
	& \; \int_{\mathbb{R}^3_+}\mathrm{d}\mu(\textbf{x})=1
	\end{aligned}
	\end{equation}
	The upper and lower bounds we obtain for different strikes \[
	K \in \{ 30, 35, 40 ,45, 50 \}
	\]
	are given in Table~\ref{tab:Table3} as well as the bounds obtained by Boyle and Lin. 
	Since all constraint functions are polynomial the only function contributing to the partition is the objective $\max\{0,\max(x_1,x_2,x_3)-K \}$. The resulting partition consists of 4 sets. To solve the first level of the hierarchy we introduce $4 \times 35 = 140$ moment variables. The final problem has $13$ equality constraints, $61$ inequality constraints and $22$ LMIs, each involving a matrix of size $4\times 4$.
	As in the previous example, the weakness of the bound is due to the fact that not enough information is available and is not inherent to the approach. Note that in their paper, Boyle and Lin only give a procedure for upper bounds. Also, in the original reference Boyle and Lin include a discount factor of $\exp(-0.1)$ to account for an assumed risk free interest rate. This has no effect on the optimization problem, they simply multiply their solution by the discount factor in the end.
	
	\begin{table}
		\centering
		\begin{tabular}{|c|c|c|c|c|c|}
			\hline
			$K$ & $30$ & $35$ & $40$ & $45$ & $50$ \\
			\hline
			Boyle \& Lin \cite{boyle} & $21.51$ & $17.17$ & $13.2$ & $9.84$ & $7.3$ \\
			\hline
			\hline
			upper bound on price & $21.51$ & $17.17$ & $13.2$ & $9.84$ & $7.3$\\
			\hline
			computation time [s]  & $0.02$ & $0.01$ & $0.01$ & $0.02$ & $0.02$\\
			\hline
			\hline
			lower bound on price & $14.21$ & $9.21$ & $4.21$ & $0$ & $0$\\
			\hline
			computation time [s]  & $0.02$ & $0.02$ & $0.01$ & $0.01$ & $0.01$\\
			\hline
		\end{tabular}
		\caption{Revisiting an example from Boyle and Lin, computing bounds on prices of a basket options given means and covariance of the underlying assets for different strikes.}
		\label{tab:Table3}
	\end{table}

	\subsection*{Basket option on tech stocks}
	As a last example we consider four different tech stocks, namely Apple Inc. (AAPL), Meta Platforms, Inc. (FB), Nvidia Corporation (NVDA), Qualcomm Incorporated (QCOM).
	Suppose one wants to price a basket option on these given the data provided in Table~\ref{tab:TableTechOptions} with payoff function $\max(0, \frac{1}{4}(x_1+\dots + x_4)-K)$, where the $x_i$ are the prices of the stocks of the given companies.
	The bounds obtained by solving the first level of the hierarchy for different strike prices are shown in Table~\ref{tab:TableResultsTech}. We set $B = 400$ and $M = 200\,000$.
	For this problem with $K = 140$, the partition consisted of $1938$ sets, for each of which we introduce $70$ variables, making $135\,660$ variables, $135\,661$ inequality constraints, $21$ equality constriants and $18\,726$ LMIs, each involving a $5\times 5$ matrix.   
	It is clear that the size of the partition necessary to compute these bound grows exponentially in the number of assets considered since it is lower bounded by $\prod_{i=1}^n N_i$. Even though for low levels of the hierarchy the involved matrices are very small, size of the partition is the limiting factor in the computations, since for every subset we need to introduce moment variables and at least $n$ LMIs. Note that changing $K$ may slightly change the number of partitions.
	\begin{table}
		\centering
		\begin{tabular}{|c||c|c|c|c|c|}
			\hline
			Company & (strike, price) & (strike, price)& (strike, price) & (strike, price) & (strike, price) \\
			\hline
			AAPL & $(120,45.2)$ &  $(130,35.7)$ & $(145,21.75)$ & $(160,9.1)$ & $(170,3.35)$\\
			\hline
			FB & $(155,52.7)$ & $(170,38.5)$ & $(180,29.85)$ & $(190,22)$ & $(200,14.75)$\\
			\hline
			NVDA & $(175,57.9)$ & $(180,53.2)$ & $(190,43.85)$ & $(195,39.35)$ & $(227.5,10.75)$\\
			\hline
			QCOM & $(130,35.35)$ & $(145,20.5)$ & $(157.5,8.8)$ & $(167.5,2.32)$ & $(175,0.47)$\\
			\hline
		\end{tabular}
		\caption{Strikes and corresponding prices for European call options observed on March 1st 2022, all prices in USD. }
		\label{tab:TableTechOptions}
	\end{table}

	\begin{table}
		\centering
		\begin{tabular}{|c|c|c|c|c|c|c|c|}
			\hline
			$K$ & $140$ & $150$ & $160$ & $170$ & $180$ & $190$ & $200$ \\
			\hline
			upper bound on price & $52.79$ & $42.89$ & $33.48$ & $24.53$ & $15.68$ & $8.51$ & $6.99$\\
			\hline
			computation time [s]  & $32.38$ & $43.10$ & $40.92$ & $42.89$ & $44.11$ & $46.46$ & $39.44$\\
			\hline 
			\hline
			lower bound on price & $46.26$ & $36.26$ & $26.27$ & $16.28$ & $6.28$ & $0.0$ & $0.0$\\
			\hline
			computation time [s]  & $47.16$ & $50.48$ & $53.89$ & $52.67$ & $57.01$ & $45.74$ & $37.70$\\
			\hline
		\end{tabular}
		\caption{Bounds for basket options on tech firms subject to observable data of Table~\ref{tab:TableTechOptions}}
		\label{tab:TableResultsTech}
	\end{table}
\subsection{Lasserre hierarchy of inner range}\label{sec:innerrange}
The one considered in this section, known as the Lasserre measure-based hierarchy of inner bounds introduced by Lasserre \cite{lasserre2010}, consists of fixing a reference measure $\nu$ on $\mathbb{R}^n_+$ such that $\nu(\mathbb{R}^n_+)< \infty$ and then approximating the density function of the optimal measure $\mu$ for \eqref{prob1} by SOS polynomials $h_r(\textbf{x}) \in \Sigma[\textbf{x}]_r$, such that $\mathrm{d}\mu(\textbf{x}) = h_r(\textbf{x})\mathrm{d}\nu(\textbf{x})$. This has the advantage that instead of searching for an optimal measure in the infinite dimensional cone $\mathcal{M}(\mathbb{R}^n_+)_+$ we optimize over the set of sums of squares of fixed degree, which can be done with SDP techniques. Opposed to before, the cone of measures $\mathcal{M}(\mathbb{R}^n_+)_+$ is here approximated from inside, while before, we used an outer approximation.  A possible choice for the reference measure is

\[ 
\mathrm{d}\nu(\textbf{x}) = \exp\left(- \sum_{i=1}^{n} x_i \right)\mathrm{d}\textbf{x}\,.
\]

An important assumption on the reference measure is that its moments must be available in closed form or efficiently computable. In the case above the moments are given by $\int_{\mathbb{R}^n_+} \textbf{x}^\alpha d\nu(\textbf{x}) = \alpha!$. The level $r$ relaxation of problem \eqref{prob1} can be formulated as follows
\begin{equation}\label{measureupper}
	\begin{aligned}
		\inf_{h_r \in \Sigma[\textbf{x}]_r} & \int_{\mathbb{R}^n_+} \varphi(\textbf{x})h_r(\textbf{x})\mathrm{d}\nu(\textbf{x}) \\
		\text{ s.t. } & \int_{\mathbb{R}^n_+}f_{i,j}(\textbf{x})h_r(\textbf{x})\mathrm{d}\nu(\textbf{x}) = q_{i,j}\,, \text{ for } i \in [n], j \in[N_i] \\
		& \int_{\mathbb{R}^n_+} f_\ell(\textbf{x})h_r(\textbf{x}) \mathrm{d}\nu(\textbf{x}) = p_\ell\,, \text{ for } \ell \in [m] \\
		& \int_{\mathbb{R}^n_+} \|\textbf{x}\|_2^2 h_r(\textbf{x}) \mathrm{d}\nu(\textbf{x}) \le M\,. \\
	\end{aligned}
\end{equation}
This problem can be cast as an SDP. It should be noted that the above SDP might be infeasible even if the GMP has an optimal solution. As a simple example consider the following constraint for some $\alpha \in \mathbb{N}^n$

\[
\int_{\mathbb{R}^n_+} \textbf{x}^\alpha \mathrm{d}\mu(\textbf{x}) = 0\,.
\]
While the atomic Dirac delta measure $\delta_0$ at $0$ certainly satisfies this equation, there is no $r \in \mathbb{N}$ such that there is a degree $r$ sos polynomial density function that does. One can, however, relax the constraints slightly, by searching for an $h_r$ such that one lands in (increasingly) close proximity of the right hand side. Consider the following generalized moment problem 

\begin{equation}\label{gmpdistrob}
	b_0 = \inf_{\nu \in \mathcal{P}(K)_+}  \left \{ \int_{K_0} f_0(\textbf{x})\mathrm{d}\nu(\textbf{x}) : \int_{K_i} f_i(\textbf{x})\mathrm{d}\nu(\textbf{x})=b_i \right\}\,, 
\end{equation}
where $ \mathcal{P}(K)_+$ is the set of probability measures on $K \subset \mathbb{R}^n$, $\mathrm{int}K \neq \emptyset$ and $K_i \subset K$ is closed for every $i = 0,1, \dots, m$. De Klerk et al. proved the following result in \cite{distrob}.

\begin{theorem}\label{thmdeklerk}
	Let $\mu$ be a reference measure with known (or efficiently computable) moments such that the moments are finite and $\int_K x_i^{2k}\mathrm{d}\mu(\textbf{x}) \le (2k)!M$ for some $M > 0$ and all $i \in [n], k \in \mathbb{N}$. If all $f_i$ for $i = 0,1, \dots, m$ are polynomials, then, as $ r \rightarrow \infty$ we have
	
	\[ \varepsilon(r): = \inf_{h \in \Sigma[\textbf{x}]_r} \max_{i = 0,1,\dots, m} \left \vert \int_{K_i} f_i(\textbf{x}) h(\textbf{x})\mathrm{d}\mu(\textbf{x}) - b_i \right \vert \]
	tends to zero ($\varepsilon(r) = o(1)$). 
\end{theorem}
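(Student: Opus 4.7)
The plan is to construct an SOS density $h_r \in \Sigma[\textbf{x}]_r$ whose associated measure $h_r\,\mathrm{d}\mu$ weakly approximates a near-optimal feasible measure of \eqref{gmpdistrob}. First, I would invoke Richter's theorem (as used in Corollary~\ref{cor1}) applied to an $\eta$-optimal feasible measure of \eqref{gmpdistrob} to produce a finite atomic measure $\nu^{\star} = \sum_{j=1}^{N} \alpha_j \delta_{x_j}$ with $x_j \in K$ and $\alpha_j > 0$, satisfying $\int_{K_i} f_i\,\mathrm{d}\nu^{\star} = b_i$ for $i = 1,\dots,m$ and $\bigl|\int_{K_0} f_0\,\mathrm{d}\nu^{\star} - b_0\bigr| \le \eta$. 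By a small generic perturbation of the atoms inside $K$ I would assume, for each $i$, that every $x_j$ lies either in $\mathrm{int}(K_i)$ or in $K\setminus\overline{K_i}$, so that the boundaries of the $K_i$ do not interfere with the weak limits below.

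The core technical step is to construct, for every atom $x_j$ and every $r$, an SOS polynomial $\rho_{j,r}\in\Sigma[\textbf{x}]_r$ such that the measures $\rho_{j,r}(\textbf{x})\,\mathrm{d}\mu(\textbf{x})$ converge weakly to $\delta_{x_j}$ when tested against any fixed polynomial as $r\to\infty$. The Carleman-type bound $\int_K x_i^{2k}\,\mathrm{d}\mu \le (2k)!\,M$ is precisely what enables this on the unbounded set $K$: it implies moment determinacy of $\mu$ and controls how tightly polynomial densities against $\mu$ can be made to concentrate while keeping tail integrals under control. A scaled and squared localizing kernel associated with $\mu$ (for instance, a rescaled Christoffel-Darboux kernel built from orthogonal polynomials for $\mu$) yields such a sequence, with the degree and scaling tuned so that the concentration near $x_j$ dominates the polynomial tail growth of each $f_i$.

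Setting $h_r := \sum_{j=1}^{N} \alpha_j \rho_{j,r}$ gives an element of $\Sigma[\textbf{x}]_r$. Combining the interior/complement dichotomy for each $K_i$ with weak convergence yields
\[
\int_{K_i} f_i h_r \, \mathrm{d}\mu \;=\; \sum_{j=1}^{N} \alpha_j \int_{K_i} f_i \rho_{j,r} \, \mathrm{d}\mu \;\longrightarrow\; \sum_{x_j \in K_i} \alpha_j f_i(x_j) \;=\; \int_{K_i} f_i \, \mathrm{d}\nu^{\star}
\]
as $r\to\infty$, for every $i = 0, 1, \dots, m$. Since the limit equals $b_i$ for $i = 1,\dots,m$ and lies within $\eta$ of $b_0$, taking the maximum over this finite index set gives $\limsup_{r\to\infty}\varepsilon(r) \le \eta$; letting $\eta \to 0$ concludes $\varepsilon(r)\to 0$.

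The main obstacle is the SOS kernel construction on the unbounded domain: without the Carleman-type bound a polynomial density against a measure with heavy tails cannot simultaneously concentrate near a point and keep polynomial tail integrals small. The $(2k)!\,M$ growth rate provides exactly the decoupling needed to balance concentration against tail decay, and to obtain the required weak convergence uniformly over the finite family $\{f_i\}_{i=0}^{m}$.
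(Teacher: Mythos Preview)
The paper does not prove this theorem; it is quoted verbatim from de~Klerk, Kuhn, and Postek \cite{distrob} and stated without proof (``De Klerk et al.\ proved the following result in \cite{distrob}''). So there is no in-paper argument to compare against.

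Your outline is in the right spirit and is close to how the original reference argues: reduce to a finitely atomic near-optimizer via Richter, then build SOS densities against the reference measure that concentrate at the atoms, using the Carleman-type growth bound to control polynomial tails. Two places deserve more care before this becomes a proof. First, your ``small generic perturbation'' of the atoms to avoid the boundaries $\partial K_i$ will in general destroy the exact equalities $\int_{K_i} f_i\,\mathrm{d}\nu^{\star}=b_i$; you need to argue that the perturbation can be made so that these integrals change by at most an additional $\eta$, and then absorb this into the final $\limsup$ bound. Second, the convergence you need is of $\int_{K_i} f_i\,\rho_{j,r}\,\mathrm{d}\mu$, i.e.\ integration of $\rho_{j,r}\,\mathrm{d}\mu$ against $f_i\mathbbm{1}_{K_i}$, which is not a polynomial and not bounded when $K$ is unbounded. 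Your interior/complement dichotomy handles the discontinuity at $\partial K_i$, but you must also show that for each fixed polynomial $f_i$ the tail contribution $\int_{\{\|\textbf{x}\|>R\}} |f_i|\,\rho_{j,r}\,\mathrm{d}\mu$ can be made small uniformly in $r$; this is exactly where the bound $\int x_i^{2k}\,\mathrm{d}\mu\le (2k)!\,M$ is used, and it requires an explicit estimate on the kernel $\rho_{j,r}$ rather than only a qualitative appeal to moment determinacy.
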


This means that if we fix an $\varepsilon > 0$ and relax the equality constraints to an $\varepsilon$ neighborhood of the RHS, then we will eventually (for $r$ large enough) find a feasible solution for the relaxation such that the optimal value is at most $\varepsilon$ away from the true optimum. Theorem \ref{thmdeklerk} promises convergence but we cannot say anything about the rate at which $\varepsilon$ goes to zero. It shall be mentioned that adding the $\varepsilon(r)$ in the relaxation does not necessarily result in the inner range of the bounds of the sought option prices, since this is basically an outer approximation of the inner range. Another way to think of it is first relaxing the equality constraints of problem \eqref{measureupper} resulting in an increase of the possible range and then applying the inner approximation to the obtained optimization problem.  When adding the $\varepsilon_r$-relaxation it is clear that we cannot expect monotonicity of the bounds, which will become apparent in the numerical results of section \ref{sec:innerEx}.

\subsection{Univariate example}\label{sec:innerEx}

Consider the following example with data taken from \cite{bertsimas}. 

\begin{equation}\label{measureupperEx}
	\begin{aligned}
		\sup_{h_r \in \Sigma[x]_r} / \inf_{h_r \in \Sigma[x]_r} & \int_{\mathbb{R}_+} \max(0, x-105)h_r(x)\mathrm{d}\nu(x) \\
		\text{ s.t. } & \int_{\mathbb{R}_+}\max(0,x-100)h_r(x)\mathrm{d}\nu(x) = 8.375 \\
		& \int_{\mathbb{R}_+} \max(0,x-110)h_r(x) \mathrm{d}\nu(x) = 1.875 \\
		& \int_{\mathbb{R}_+}h_r(x)  \mathrm{d}\nu(x) = 1
	\end{aligned}
\end{equation}
We know that the optimal lower and upper bounds for this data set are $3.375$ and $5.125$, respectively. 
To improve the numerical stability of SDP \eqref{measureupperEx}, one can use a basis which is orthogonal on $\mathbb{R}_+$ w.r.t.~the measure $\mathrm{d}\nu(x) = \exp(-x)\mathrm{d}x$, namely the Laguerre basis defined by
\[
L_n(x) = \sum_{i = 0}^n \binom{n}{i}\frac{(-1)^i}{i!}x^i\,.
\]
These polynomials form an orthogonal system for the Hilbert space $L^2(\mathbb{R}_+, w(x)\mathrm{d}x)$ with $w(x) = \exp(-x)$, i.e., 
\[
\int_0^\infty L_n(x)L_m(x)\exp(-x)\mathrm{d}x = \begin{cases} 1, \text{ if } m = n \\
0, \text{ otherwise. }\end{cases}
\]
To implement the program we used the fact that 

\begin{equation}
	\int_{k}^\infty x^n \mathrm{d}\nu(x) = \exp(-k)\left(\sum_{\ell = 0}^{n}\frac{n!}{\ell !}k^\ell \right)
\end{equation}
and relaxed it to

\begin{equation}\label{measureupperEx2}
	\begin{aligned}
		\sup_{h_r \in \Sigma[x]_r} / \inf_{h_r \in \Sigma[x]_r} & \int_{\mathbb{R}_+} \max\left(0, x-\frac{105}{110}\right)h_r(x)\mathrm{d}\nu(x) \\
		\text{ s.t. } & \left| \int_{\mathbb{R}_+}\max\left(0,x-\frac{100}{110}\right)h_r(x)\mathrm{d}\nu(x) -\frac{8.375}{110} \right| \le \varepsilon_r \\
		& \left| \int_{\mathbb{R}_+} \max\left(0,x-\frac{110}{110}\right)h_r(x) \mathrm{d}\nu(x) - \frac{1.875}{110} \right| \le \varepsilon_r \\
		&\left| \int_{\mathbb{R}_+}h_r(x)  \mathrm{d}\nu(x) - \frac{1}{110} \right| \le \varepsilon_r\,.
	\end{aligned}
\end{equation}
As a normalization step, we divided the data by $110$. We indicate in Table \ref{tab:table7inner} how the optimal values change if for level $r$ we choose $\varepsilon_r$ to be the smallest value such that the corresponding relaxation still has a feasible solution. In other words, decreasing $\varepsilon_r$ in this cases results in infeasibility. Observe that no monotonicity appears, which is expected because the equality constraint is relaxed. We mention that in Table \ref{tab:table7inner} for $r \in \{6,7\}$ MOSEK could not solve the maximization problem. However, the upper bound approximations were already reasonably accurate at the previous levels. 
It seems that the approach considered in section \ref{sec:outer} is superior to the one presented in this section, since there we get the optimal values of $5.125$ and $3.375$ for the first level of the hierarchy already. Especially, when considering the fact that increasing $r$ quickly results in numerical problems and the problem is highly susceptible to small changes in $\varepsilon_r$. 
Additionally, it is difficult to estimate how much the $\varepsilon$ relaxation perturbs the optimal value of the optimization problem. 

\begin{table}
	\centering
	\begin{tabular}{|c|c|c|c|c|c|c|c|}
		\hline
		$r$ & $2$ & $3$ & $4$ & $5$  & $6$ & $7$ & $\infty$\\
		\hline
		$\varepsilon_r$ & $0.0273$ & $0.02525$ & $0.022125$ & $0.01755$ & $0.0161$ & $0.0161$ & $0$\\
		\hline
		\hline
		upper bound & $5.1279$ & $5.1366$ & $5.1288$ & 
		$5.1264$ & - & - & $5.125$\\
		\hline
		time in s & $0.01$ & $0.01$ & $0.01$ & $0.01$ & - & -& - \\
		\hline 
		\hline
		lower bound & $5.122$ & $5.1136$ & $5.1221$ & $5.1251$ & $4.224$ & $3.3522$ & $3.375$\\
		\hline
		time in s & $0.01$ & $0.01$ & $0.01$ & $0.01$ & $0.02$ & $0.03$ & -\\
		\hline
	\end{tabular}
	\caption{Optimal solutions for the level-$r$ relaxation of the measure-based Lasserre hierarchy applied to the $\varepsilon_r$ relaxation given in \eqref{measureupperEx2} for Laguerre basis with varying $\varepsilon_r$ for $r = 2,\dots,7$. The $\varepsilon_r$ are the smallest possible such that the resulting SDP still has a feasible solution.}
	\label{tab:table7inner}
\end{table}

\section{Conclusion and further research}

In this section we reflect on our results,  and state open questions that could be further studied. The model we considered has the advantage that it combines different possibilities of using observable data. Option prices with different strikes as well moment information like mean, (co-)variance etc, can be taken into account, which is very useful in practice.  The Moment-SOS hierarchy, which was used to obtain the outer range delivers good approximations for low hierarchy levels. The method for the inner range quickly fails but in the considered cases still gave reasonable bounds. However, comparing the two, the outer range clearly outperformed the inner range.  

Regarding the compactness argument it should be noted that in practice it might be prohibitive to carry out the core variety procedure in a setting with many assets and constraints. In a setting where it becomes too difficult one can of course start with an educated guess for the $B$ defined in section \ref{sec:boundsupport} and compute bounds for this $B$ and a larger one $\hat{B} > B$, and increase $B$ until the optimal values no longer change. 

\subsection*{Acknowledgements}
This  research  was  supported  by  the  European  Union’s  Horizon  2020  research  and  innovation  programme under the Marie Sk\l odowska-Curie grant agreement N. 813211 (POEMA). The second author would like to thank Corbinian Schlosser for insightful discussions.


\end{document}